\newtheorem{theorem}{\indent Theorem}[section]
\newtheorem{corollary}[theorem]{\indent Corollary}
\newtheorem{lemma}[theorem]{\indent Lemma}
\newtheorem{remark}[theorem]{\indent Remark}
\begin{document}
\title{Insensitizing controls for a fourth order semi-linear parabolic equations}
\author{
Bo You\footnote{Email address: youb2013@xjtu.edu.cn}
 \\
{\small School of Mathematics and Statistics, Xi'an Jiaotong University} \\
\small Xi'an, 710049, P. R. China\\
Fang Li\footnote{Email address: fli@xidian.edu.cn}
	 \\
	{\small School of Mathematics and Statistics, Xidian University} \\
	\small Xi'an, 710071, P. R. China
	}

\maketitle

\begin{center}
\begin{abstract}
This paper is concerned with the existence of insensitizing controls for a fourth order semilinear parabolic equation. Here, the initial data is partially unknown, we would like to find controls such that a specific functional is insensitive for small perturbations of the initial data. In general, this kind of problems can be recast as a null controllability problem for a nonlinear cascade system. We will first prove a null controllability result for a linear problem by global Carleman estimates and dual arguments. Then, by virtue of Leray-Schauder's fixed points theorem, we conclude the null controllability for the cascade system in the semi-linear case.

\textbf{Keywords}:  Carleman estimates; Insensitizing controls; Null controllability, Leray-Schauder's fixed points theorem.

\textbf{Mathematics Subject Classification (2010)} : 35Q93; 49J20; 90C31; 93B05; 93C20; 93C41.
\end{abstract}
\end{center}

\section{Introduction}
\def\theequation{1.\arabic{equation}}\makeatother
\setcounter{equation}{0}
Let $D\subset \mathbb{R}^n (n\geq 2)$ be a nonempty bounded connected open set with smooth boundary $\partial D,$ $T>0$ and $\omega\subset D$ is a small nonempty open subset which is usually referred to as a control domain. Denote by $Q=D\times (0,T),$ $\Sigma=\partial D\times (0,T),$ $Q_\omega=\omega\times(0,T).$  Let $\mathcal{O}\subset D$ be another open set which is the so-called observation set.

In this paper, we mainly consider the following semilinear fourth order parabolic equation with incomplete data:
\begin{equation}\label{eq1.1}
\begin{cases}
\frac{\partial y}{\partial t}+\Delta^2 y+a_0y+B_0\cdot\nabla y+B:\nabla^2 y+a_1\Delta y=F(y,\nabla y,\nabla^2y)+v\chi_\omega+f,\,\,\,\,\forall\,\,\,(x,t)\in Q,\\
y=\Delta y=0,\,\,\,\,\,\forall\,\,\,\,(x,t)\in \Sigma,\\
y(x,0)=y_0(x)+\tau\hat{y}_0(x),\,\,\,\,\forall\,\,\,\,x\in D.
\end{cases}
\end{equation}
Here, the functions $a_0,$ $a_1\in L^{\infty}(Q;\mathbb{R}),$ $B_0=(B_{01},B_{02},\cdots,B_{0n})\in L^{\infty}(Q;\mathbb{R}^n),$ $B=(B_{ij})_{n\times n}\in L^{\infty}(Q;\mathbb{R}^{n^2}),$ $f\in L^2(Q)$ is a given externally applied force, the function $F:\mathbb{R}\times\mathbb{R}^n\times\mathbb{R}^{n^2}\rightarrow\mathbb{R}$ is locally Lipschitz continuous, $\chi_\omega$ is the characteristic function of the set $\omega,$ $v\in L^2(Q_\omega)$ is a control function to be determined and the initial data $y(x,0)$ is partially unknown in the following sense:
\begin{enumerate}[(a)]
\item $y_0\in L^2(D)$ is known.
\item $\hat{y}_0\in L^2(D)$ is unknown with $\|\hat{y}_0\|_{L^2(D)}=1.$ 
\item $\tau$ is a small unknown real number.
\end{enumerate}
Let $y$ be the solution of problem \eqref{eq1.1} associated to $\tau$ and $v,$
we observe the solution of problem \eqref{eq1.1} via some functional $\Phi(y),$ which is called the sentinel. Here, the sentinel is defined by the square of the local $L^2$-norm of the state variable:
\begin{align}\label{eq1.2}
\Phi(y)=\frac{1}{2}\int_0^T\int_{\mathcal{O}}|y(x,t)|^2\,dxdt.
\end{align}
A control function $v$ is said to insensitize the functional $\Phi,$ if
\begin{align}\label{eq1.3}
\frac{\partial \Phi(y)}{\partial \tau}|_{\tau=0}=0,\,\,\,\,\forall\,\,\,\hat{y}_0\in L^2(D)\,\,\,\textit{with}\,\,\,\|\hat{y}_0\|_{L^2(D)}=1.
\end{align}
Thus, the insensitizing control problem is to seek for a control $v,$ such that the uncertainty in the initial data does not effect the measurement $\Phi$ at least at the first order.

To the best of our knowledge, this kind of insensitizing control problem was first considered by J. L. Lions in \cite{ljl2}. Later, in \cite{bo, ljl1}, the authors reformulated the insensitization problem with this kind of the sentinel $\Phi$ as a null controllability problem for a cascade system. Inspired by these works,  there have been many results concerning the existence of insensitizing controls in all kinds of different contexts. Initially, the existence of an approximate insensitizing controls (i.e., such that $\left|\partial_\tau\Phi(y)|_{\tau=0}\right|\leq\epsilon$) was proved in \cite{bo} for a semilinear heat system with $\mathcal{C}^1$ and globally Lipschitz nonlinearities. In \cite{tld}, the author proved for the linear heat equation that we cannot expect insensitivity to hold for all initial data, except when the control acts everywhere in $\Omega.$ Regarding the class of initial data that can be insensitized, the results in \cite{tld2} also give different results of positive and negative nature.  Later, the results in \cite{bo} was generalized in \cite{bo1, bo2, xy} to superlinear heat equation with nonlinear terms depending on the state and/or its gradient. In particular, there are some results about the existence of insensitizing controls for the parabolic equation with different boundary conditions. For example, the authors in \cite{zmm} proved the existence of insensitizing controls for the parabolic equations with dynamic boundary conditions. The existence of a local insensitizing control for the semilinear parabolic equations with nonlinear Fourier boundary conditions was established in \cite{bo4}. Moreover, the author in \cite{lx} proved the existence of insensitizing controls for the quasilinear parabolic equations. The existence of insensitizing controls for a phase field system was proved in \cite{cbmr}.Additionally, the authors studied the existence of insensitizing controls for the Navier-Stokes equation and the Boussinesq system (see \cite{cn4, cn3, cn2, gm}), the semilinear wave equations (see \cite{abf, tl}).  It is worthy to mention that the authors treated the case of a different type of sentinel consisting of the gradient of the solution of a parabolic equation in \cite{gs1, smc} and the case of the curl for the Stokes system in \cite{gs2}.

Adapting the computations in \cite{bo} to problem \eqref{eq1.1}-\eqref{eq1.3}, we conclude that the existence of a control $v$ such that \eqref{eq1.3} holds is equivalent to the existence of a control $v$ such that the solution $(y,q)$ of problem
\begin{equation}\label{eq1.4}
\begin{cases}
\frac{\partial y}{\partial t}+\Delta^2 y+a_0y+B_0\cdot\nabla y+B:\nabla^2 y+a_1\Delta y=F(y,\nabla y,\nabla^2y)+\chi_\omega v+f,\,\,\,\,\forall\,\,\,(x,t)\in Q,\\
-\frac{\partial q}{\partial t}+\Delta^2 q+a_0q-\nabla\cdot(B_0 q)+\sum_{i,j=1}^n\frac{\partial^2(B_{ij}q)}{\partial x_i\partial x_j}+\Delta(a_1q)=F_y(y,\nabla y,\nabla^2y)q\\
-\nabla\cdot(\nabla_pF(y,\nabla y,\nabla^2y)q)+\sum_{i,j=1}^n\frac{\partial^2(F_{r_{ij}}(y,\nabla y,\nabla^2y)q)}{\partial x_i\partial x_j}+\chi_{\mathcal{O}} y,\,\,\,\,\forall\,\,\,(x,t)\in Q,\\
y=\Delta y=0,\,\,\,q=\Delta q=0,\,\,\forall\,\,\,\,(x,t)\in\Sigma,\\
y(x,0)=y_0(x),\,\,q(x,T)=0,\,\,\forall\,\,\,\,x\in D
\end{cases}
\end{equation}
verifying 
\begin{align}\label{eq1.5}
q(x,0)=0, \,\,\,\,\forall\,\,\,x\in D,
\end{align}
where $p=\nabla y$ and $r_{ij}=\frac{\partial^2y}{\partial x_i\partial x_j}.$ 

In recent several years, there are some results about the controllability for fourth order parabolic equations in both one dimension (see \cite{cn5, cn7, cn6, ce1, ce, gp, hsv, lgm}) and the higher dimensions (see \cite{dji, gs, kk, lq, yh}). In particular, the approximate controllability and non-approximate controllability of higher order parabolic equations were studied in \cite{dji}. The author in \cite{yh} proved the null controllability of fourth order parabolic equations by using the ideas of \cite{lg}. It is worthy to mention that the Carleman inequality for a fourth order parabolic equation with $n\geq 2$ was first established in \cite{gs}. Later, the author in \cite{kk} proved the null controllability and the exact controllability to the trajectories at any time $T>0$ for the fourth order semi-linear parabolic equations with a control function acting at the interior.  The null controllability for fourth order stochastic parabolic equations was proved by duality arguments and a new global Carleman estimates in \cite{lq}. A unified weighted inequality for fourth-order partial differential operators was given in \cite {cy}. Moreover, they applied it to obtain the log-type stabilization result for the plate equation. Recently, we in \cite{yb} established the global Carleman estimates for the fourth order parabolic equations with low regularity terms subject to the homogeneous Dirichlet boundary conditions of $y$ as well as $\Delta y,$ and applied it to the null controllability. However, there is no results concerning the existence of insensitizing controls for fourth order semilinear parabolic equations. Since the insensitizing control problems describe some kind of stability of system \eqref{eq1.1} with respect to initial data, it is very meaningful to investigate the existence of insensitizing controls for problem \eqref{eq1.1}. 

The main objective of this paper is to study the insensitizing controls problem \eqref{eq1.1}-\eqref{eq1.3}. Inspired by the work in \cite{bo}, we conclude that the insensitizing controls problem \eqref{eq1.1}-\eqref{eq1.3} is equivalent to the partial null controllability of problem \eqref{eq1.4}.Thus, we need to establish an observation inequality for the adjoint problem \eqref{eq3.1} of the linearized system for problem \eqref{eq1.4} based on the duality arguments. But problem \eqref{eq1.4} is coupled, we will choose some suitable cut-off function and combine the global Carleman estimates to conclude the following inequality
\begin{align*}
\int_0^T\int_\mathcal{O}|\psi|^2e^{2s\alpha}\leq C\int_{Q_{\omega_1}}|\varphi|^2\,dxdt
\end{align*}
for some suitable subset $\omega_1$ of $D$ and weight function $\alpha,$ which will entails the desired observability inequality of problem \eqref{eq3.1}. Throughout this paper, we will always suppose that $\omega\cap\mathcal{O}\neq\emptyset$, which is a condition that has always been imposed as long as insensitizing controls are concerned. However, in \cite{tld1}, it has been proved that this is not a necessary condition for $\epsilon$-insensitizing controls for some linear parabolic equations (see also \cite{ms}).  Thus, we shall assume that $y_0\equiv 0$ which is a classical hypothesis in insensitization problems.


The rest of this paper is organized as follows: in Section 2, we will recall some global Carleman estimates and prove a technique lemma. In Section 3, we will prove an observability inequality, which implies the existence of insensitizing controls for fourth order linear parabolic equations. Section 4 is devoted to the existence of insensitizing controls for the semilinear case. 
\section{\bf Preliminaries}
\def\theequation{2.\arabic{equation}}\makeatother
\setcounter{equation}{0}
In this section, we will recall the Carleman inequalities of fourth order parabolic equations and some lemmas used in the sequel. To this purpose, we first introduce the following weight functions.
\begin{lemma}(\cite{fav})\label{le2.1}
Let $\omega_0\subset\subset D$ be an arbitrary fixed subdomain of $D$ such that $\overline{\omega_0}\subset\omega.$ Then there exists a function $\eta\in\mathcal{C}^4(\overline{D})$ such that
\begin{align*}
\eta(x)>0,\,\,\,\,\forall\,\,\,x\in D;\,\,\eta(x)=0,\,\,\,\,\forall\,\,\,x\in\partial D,;\,\,|\nabla\eta(x)|>0,\,\,\,\,\textit{for\,\,\,all}\,\,\,x\in\overline{D\backslash\omega_0}.
\end{align*}
\end{lemma}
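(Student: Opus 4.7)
The plan is to construct $\eta$ in two stages: first produce a smooth function on $\overline{D}$ with the correct boundary behavior and only finitely many critical points, and then transport those critical points into $\omega_0$ by a diffeomorphism fixing $\partial D$.

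First I would obtain an initial candidate $\tilde\eta \in \mathcal{C}^4(\overline{D})$ that is positive in $D$ and vanishes on $\partial D$. A natural choice is a $\mathcal{C}^4$-smooth regularization of the boundary distance function: since $\partial D$ is smooth, the signed distance $d(\cdot,\partial D)$ is $\mathcal{C}^4$ in a tubular neighborhood of $\partial D$, and I can glue it, via a partition of unity, to any strictly positive smooth function on the interior. By construction $|\nabla \tilde\eta|=1$ near $\partial D$, so all critical points of $\tilde\eta$ lie in a compact set $K \subset D$.

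Next I would use a small generic perturbation (Sard's theorem together with Thom's transversality) to replace $\tilde\eta$ by a nearby function, still positive in $D$ and vanishing on $\partial D$, whose critical points in $D$ are all non-degenerate. Since $\overline{D}$ is compact, the perturbed $\tilde\eta$ has finitely many critical points $\{x_1,\ldots,x_N\}$, all in the interior and away from $\partial D$. The key technical care here is to keep the perturbation supported strictly inside $D$, so the boundary vanishing and the behavior near $\partial D$ (in particular $|\nabla\tilde\eta|>0$ near $\partial D$) are preserved; this is easily arranged by multiplying the perturbation by a boundary cutoff function.

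In the second stage I would move the critical points into $\omega_0$. For each $x_i \notin \omega_0$ I choose a smooth simple path in $D$ from $x_i$ to a point of $\omega_0$, and build a smooth vector field $X_i$, compactly supported in a thin tubular neighborhood of this path (disjoint from the other paths and from $\partial D$), whose time-one flow $\Phi_i$ maps $x_i$ into $\omega_0$. Composing these finitely many isotopies yields a $\mathcal{C}^4$ diffeomorphism $\Phi:\overline{D}\to\overline{D}$ with $\Phi|_{\partial D}=\mathrm{id}$. I then set
\begin{equation*}
\eta \;=\; \tilde\eta \circ \Phi^{-1}.
\end{equation*}
Then $\eta$ inherits $\mathcal{C}^4$ regularity and the boundary conditions from $\tilde\eta$, it remains strictly positive in $D$, and its critical set is $\Phi(\{x_1,\ldots,x_N\}) \subset \omega_0$, so that $|\nabla\eta|>0$ on $\overline{D\setminus\omega_0}$ as required.

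The main obstacle I would expect is the bookkeeping in Stage 1: simultaneously achieving $\mathcal{C}^4$ regularity up to the boundary, strict positivity in $D$, the vanishing on $\partial D$, and Morse-type non-degeneracy of all interior critical points. Making the perturbation both small in $\mathcal{C}^4$ and supported away from $\partial D$ — so that the near-boundary gradient estimate is untouched — is the delicate point. Once that is in place, Stage 2 is a standard isotopy argument with compactly supported vector fields, and the verification of the three properties for $\eta$ is immediate.
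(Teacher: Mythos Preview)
The paper does not supply its own proof of this lemma; it simply quotes the result from \cite{fav} (Fursikov--Imanuvilov). Your two-stage construction --- first a Morse-type perturbation of a smoothed boundary-distance function to obtain finitely many nondegenerate interior critical points, then a compactly supported isotopy of $\overline{D}$ moving those critical points into $\omega_0$ --- is precisely the standard argument given in that reference, and the sketch is correct. One small remark: since $\partial D$ is smooth you may as well work in $\mathcal{C}^\infty$ throughout (the distance function is $\mathcal{C}^\infty$ near $\partial D$, the Morse perturbation and the isotopy can be taken $\mathcal{C}^\infty$), which automatically yields $\eta\in\mathcal{C}^4(\overline{D})$ and avoids any regularity bookkeeping.
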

In order to state the global Carleman inequality, we define some weight functions:
\begin{align}
\label{eq2.1}&\alpha_0(x)=e^{\lambda(2\|\eta\|_{L^{\infty}(D)}+\eta(x))}-e^{4\lambda\|\eta\|_{L^{\infty}(D)}},\,\,\,\,\xi_0(x)=e^{\lambda(2\|\eta\|_{L^{\infty}(D)}+\eta(x))},\\
\label{eq2.2}&\alpha(x,t)=\frac{\alpha_0(x)}{\sqrt{t(T-t)}},\,\,\,\xi(x,t)=\frac{e^{\lambda(2\|\eta\|_{L^{\infty}(D)}+\eta(x))}}{\sqrt{t(T-t)}}.
\end{align}
Moreover, they possess the following properties:
\begin{align}\label{eq2.3}
\nabla\alpha=\nabla\xi=\lambda\xi\nabla\eta,\,\,
\xi^{-1}\leq \frac{T}{2},\,\,
|\alpha_t|+|\xi_t|\leq\frac{T}{2}\xi^3,\,\,\forall\,\,(x,t)\in Q.
\end{align}
\begin{lemma}\label{le2.2}(see \cite{gs, yb})
Assume that $z_0\in L^2(D),$ $g\in L^2(Q)$ and the functions $\alpha,$ $\xi$ are defined by \eqref{eq2.2}. Then there exists $\hat{\lambda}>0$ such that for an arbitrary $\lambda\geq \hat{\lambda},$ we can choose $s_0=s_0(\lambda)>0$ satisfying: there exists a constant $C=C(\lambda)>0$ independent of $s,$ such that the solution $z\in L^2(Q)$ to problem
\begin{equation}\label{eq2.4}
\begin{cases}
L^*z=-\frac{\partial z}{\partial t}+\Delta^2 z=g,\,\,\,\,\textit{in}\,\,\,Q,\\
z=\Delta z=0,\,\,\,\,\,\textit{on}\,\,\,\,\Sigma,\\
z(x,T)=z_0(x),\,\,\,\,\textit{in}\,\,\,\,D,
\end{cases}
\end{equation}
satisfies the following inequality:
\begin{enumerate}[(i)]
\item If $g\in L^2(Q),$ for any $\lambda\geq \hat{\lambda}$  and any $s\geq s_0(\lambda)(\sqrt{T}+T),$ one has
\begin{align*}
&\int_Qe^{2s\alpha}\left(s^6\lambda^8\xi^6|z|^2+s^4\lambda^6\xi^4|\nabla z|^2+s^3\lambda^4\xi^3|\Delta z|^2+s^2\lambda^4\xi^2|\nabla^2z|^2+s\lambda^2\xi|\nabla\Delta z|^2\right)\,dxdt\\
&+\int_Qe^{2s\alpha}\left(\frac{1}{s\xi}(|z_t|^2+|\Delta^2z|^2)\right)\,dxdt\\
&\leq C\left(\int_{Q_\omega}s^7\lambda^8\xi^7|z|^2e^{2s\alpha}\,dxdt+\int_Q|g|^2e^{2s\alpha}\,dxdt\right)
\end{align*}
\item If $g=g_0+\sum_{i=1}^n\frac{\partial g_i}{\partial x_i}-\sum_{i,j=1}^n\frac{\partial^2(B_{ij}y)}{\partial x_i\partial x_j}-\Delta(a_1y)$ with $g_i\in L^2(Q)$ for any $0\leq i\leq n,$ and $a_1\in L^{\infty}(Q;\mathbb{R}),$ $B=(B_{ij})_{n\times n}\in L^{\infty}(Q;\mathbb{R}^{n^2}),$ then 
\begin{align*}
&\int_Qe^{2s\alpha}\left(s^6\lambda^8\xi^6|z|^2+s^4\lambda^6\xi^4|\nabla z|^2+s^2\lambda^4\xi^2|\Delta z|^2+s^2\lambda^4\xi^2|\nabla^2z|^2\right)\,dxdt\\
\leq&C\int_Q\left(|g_0|^2+\sum_{i=1}^n(s\lambda\xi)^2|g_i|^2\right)e^{2s\alpha}\,dxdt+C\int_{Q_\omega}s^7\lambda^8\xi^7|z|^2e^{2s\alpha}\,dxdt
\end{align*}
for any $\lambda\geq \hat{\lambda}(\lambda)(1+\|B\|_{L^{\infty}(Q)}^{\frac{1}{2}}+\|a_1\|_{L^{\infty}(Q)}^{\frac{1}{2}})$  and any $s\geq s_0(\sqrt{T}+T).$ 
\end{enumerate}
\end{lemma}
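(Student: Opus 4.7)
The plan is to prove this pair of Carleman inequalities by the standard conjugation-and-splitting technique, adapted to the fourth-order operator $L^*=-\partial_t+\Delta^2$. The computation is lengthy but follows the well-established pattern established for second-order operators in the Fursikov--Imanuvilov framework, lifted one order higher.

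\textbf{Step 1: Conjugation and splitting.} I would first introduce $w=e^{s\alpha}z$, which vanishes near $t=0$ and $t=T$ thanks to the blow-up of $-\alpha$ at the endpoints, and compute the conjugated operator
\[
\mathcal{P}_s w := e^{s\alpha}L^*(e^{-s\alpha}w)=e^{s\alpha}g.
\]
Using $\nabla\alpha=\lambda\xi\nabla\eta$, one expands $\Delta^2(e^{-s\alpha}w)$ into powers of $s\lambda\xi$; the leading terms are of order $s^4\lambda^4\xi^4|\nabla\eta|^4 w$ down to fourth derivatives of $w$. I would then split $\mathcal{P}_s w=P_1 w+P_2 w+R w$, placing the formally self-adjoint pieces in $P_1$, the formally skew-adjoint pieces in $P_2$, and throwing lower-order remainders (in $s$ or $\lambda$) into $R$. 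Squaring gives
\[
\|P_1 w\|_{L^2(Q)}^2+\|P_2 w\|_{L^2(Q)}^2+2(P_1 w,P_2 w)_{L^2(Q)}\le C\|e^{s\alpha}g\|_{L^2(Q)}^2+C\|Rw\|_{L^2(Q)}^2.
\]

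\textbf{Step 2: The cross-product expansion.} The heart of the proof is the identification of the positive terms produced by $2(P_1 w,P_2 w)$. Each summand is treated by integration by parts, using the boundary conditions $w=\Delta w=0$ on $\Sigma$ to discard boundary traces, and the vanishing of $w$ at $t=0,T$. After carefully tracking signs, the dominant contributions are the weighted integrals
\[
s^6\lambda^8\!\int_Q\!\xi^6|w|^2+s^4\lambda^6\!\int_Q\!\xi^4|\nabla w|^2+s^3\lambda^4\!\int_Q\!\xi^3|\Delta w|^2+s^2\lambda^4\!\int_Q\!\xi^2|\nabla^2 w|^2+s\lambda^2\!\int_Q\!\xi|\nabla\Delta w|^2,
\]
which arise precisely because $|\nabla\eta|>0$ away from $\omega_0$. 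Each bad term (e.g. those proportional to lower powers of $s\xi$ or those multiplied by $\eta$-derivatives that do not vanish on $\omega_0$) is absorbed either by choosing $\lambda\ge\hat\lambda$ large enough, or by Cauchy--Schwarz against the larger positive quantities just listed. The terms localized inside $\omega_0$ produce the observation integral $\int_{Q_\omega}s^7\lambda^8\xi^7|w|^2$ after use of a cut-off function $\chi\in C_c^\infty(\omega)$ equal to $1$ on $\omega_0$; bounding $\int_{Q_{\omega_0}}$ by $\int_{Q_\omega}\chi|\cdot|^2$ and integrating by parts four times transfers all derivatives on $w$ onto $\chi e^{2s\alpha}$, which is allowed because $\chi$ is compactly supported.

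\textbf{Step 3: Recovering $z$ and the right-hand-side norms.} Unwinding $w=e^{s\alpha}z$ and using $|\nabla^k\alpha|\lesssim(\lambda\xi)^k$ converts the bounds on $w$ into the stated estimate on $z$, while the terms $\frac{1}{s\xi}|z_t|^2$ and $\frac{1}{s\xi}|\Delta^2 z|^2$ are recovered directly from the equation $z_t=\Delta^2 z-g$ once the other derivative terms are controlled. This finishes (i). For (ii), the only change is in the right-hand side: each distributional derivative $\partial_i g_i$ is moved onto the test function $e^{2s\alpha}\psi$ for a suitable $\psi$ via integration by parts, producing a factor $s\lambda\xi$ per derivative, which explains the weight $(s\lambda\xi)^2|g_i|^2$. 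The terms $\sum\partial_i\partial_j(B_{ij}y)$ and $\Delta(a_1 y)$ are treated in the same fashion, but the resulting $\|B\|_{L^\infty}$ and $\|a_1\|_{L^\infty}$ factors must be absorbed into the positive quadratic terms, which forces the lower bound $\lambda\ge\hat\lambda(1+\|B\|_{L^\infty}^{1/2}+\|a_1\|_{L^\infty}^{1/2})$.

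\textbf{Expected main obstacle.} The delicate point is the bookkeeping in Step~2: the fourth-order operator produces far more cross terms than the heat case, and several of them involve $\nabla^3 w$ against $\nabla w$ or $\Delta w$ against lower derivatives, whose signs are not manifestly favourable. Choosing the correct decomposition $P_1+P_2+R$ so that every indefinite term can be dominated by a positive one of higher order in $s\lambda\xi$ is the technical core; once this is done, the passage from (i) to (ii) is essentially duality.
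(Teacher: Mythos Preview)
The paper does not actually prove Lemma~2.2: it is stated as a known result and attributed to the references \cite{gs, yb}, with no argument given in the text. So there is no ``paper's own proof'' to compare your outline against.

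That said, your sketch is the standard Fursikov--Imanuvilov conjugation-and-splitting strategy, which is indeed the method used in the cited works (Guerrero--Kassab for part~(i), and the author's earlier paper for the low-regularity variant~(ii)). As a roadmap it is accurate: the decomposition $P_1+P_2+R$, the positivity of the cross term driven by $|\nabla\eta|>0$ on $\overline{D\setminus\omega_0}$, the local absorption via a cut-off in $\omega$, and the duality passage from~(i) to~(ii) with the extra derivative shifted onto the weight are all the right ingredients. Your identification of the main obstacle---the combinatorial explosion of cross terms for a fourth-order operator and the need to engineer the split so that every indefinite term is dominated---is exactly where the real work lies in \cite{gs}. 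If you intend to supply a self-contained proof rather than cite, be aware that this ``bookkeeping'' step occupies many pages in the source papers and is not a routine adaptation of the heat-equation case; in particular, the boundary terms coming from $\Delta^2$ under the Navier conditions $z=\Delta z=0$ require separate care, and the choice of which terms go into $R$ versus $P_1,P_2$ is delicate enough that several published versions differ in the precise powers of $s,\lambda$ obtained.
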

In what follows, we also prove the following technical lemma, which will be used to establish an observability inequality.
\begin{lemma}\label{le2.3}
Let the functions $\alpha_0,$ $\xi_0,$ $\alpha,$ $\xi$ be defined by \eqref{eq2.1} and \eqref{eq2.2}, denote by $m_0=\min\limits_{x\in D}\alpha_0(x),$ $M_0=\max\limits_{x\in D}\alpha_0(x)<0,$ $n_0=\min\limits_{x\in D}\xi_0(x)>0$ and $N_0=\max\limits_{x\in D}\xi_0(x).$ Then the following conclusions hold:
\begin{enumerate}[(1)]
\item For any $s\geq \frac{4T}{|M_0|},$ we have
\begin{align*}
s^{16}\xi^{16}e^{2s\alpha}\leq 2^{48}\left(\frac{N_0}{M_0e}\right)^{16}
\end{align*}
for any $(x,t)\in Q.$
\item  For any $s\geq 0,$ we have
\begin{align*}
s^6\xi^6e^{2s\alpha}\geq A_se^{-\frac{M_s}{\sqrt{t}}}
\end{align*}
for any $(x,t)\in\Omega\times (0,\frac{T}{2}),$ where
\begin{align*}
A_s=\frac{(2sn_0)^6}{T^6}e^{\frac{-2|m_0|s}{T}},\,\,\,\,
M_s=\frac{2|m_0|s}{\sqrt{T}}.
\end{align*}
\item  For any $s\geq 0,$ we have
\begin{align*}
\xi^{-6}e^{-2s\alpha}\leq (2n_0)^{-6}T^6e^{\frac{8|m_0|s}{\sqrt{3}T}}
\end{align*}
for any $(x,t)\in\Omega\times (\frac{T}{4},\frac{3T}{4}).$
\end{enumerate}
\end{lemma}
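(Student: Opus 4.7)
All three bounds are calculus estimates on the explicit scalar functions $\alpha(x,t) = \alpha_0(x)/\sqrt{t(T-t)}$ and $\xi(x,t) = \xi_0(x)/\sqrt{t(T-t)}$. In every case I would eliminate the spatial dependence via the uniform bounds $n_0 \le \xi_0 \le N_0$ and $m_0 \le \alpha_0 \le M_0 < 0$, after which each assertion reduces to a scalar estimate in the variable $u = 1/\sqrt{t(T-t)}$, which satisfies $u \ge 2/T$ because $\sqrt{t(T-t)} \le T/2$.

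For (1), since $\alpha_0 \le M_0 < 0$ and $\xi_0 \le N_0$ the left hand side is bounded by $s^{16} N_0^{16} u^{16} e^{-2s|M_0|u}$. I would then optimize the scalar map $u \mapsto u^{16} e^{-2s|M_0|u}$: its unconstrained maximum is attained at $u^\star = 8/(s|M_0|)$ with value $(8/(s|M_0|))^{16}/e^{16}$. Multiplying by $s^{16}$ cancels the $s$-dependence algebraically and produces the universal upper bound $8^{16}/(|M_0|^{16} e^{16}) = 2^{48}/(|M_0| e)^{16}$, which is precisely the claimed constant after the factor $N_0^{16}$ is inserted. The hypothesis $s \ge 4T/|M_0|$ is exactly the statement $u^\star \le 2/T$, so the maximizer actually lies in the feasible range $u \ge 2/T$ (at the endpoint), though the bound itself holds universally.

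For (2), I would factor the claim as $s^6\xi^6 e^{2s\alpha} \ge (2sn_0/T)^6 \, e^{-2s|m_0|/\sqrt{t(T-t)}}$, using $\xi \ge 2n_0/T$ and $\alpha \ge -|m_0|/\sqrt{t(T-t)}$. The polynomial prefactor already matches $(2sn_0/T)^6$, so it remains to separate the exponent into a $t$-independent piece and the announced $M_s/\sqrt{t}$ piece. The main step is the elementary inequality
\begin{equation*}
\frac{1}{\sqrt{t(T-t)}} \le \frac{1}{T} + \frac{1}{\sqrt{tT}}, \qquad t \in (0, T/2),
\end{equation*}
which, after the substitution $r = t/T$, is equivalent to $(1-r)(1+\sqrt{r})^2 \ge 1$ on $[0, 1/2]$ and can be checked by a short direct computation of the derivative. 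Applying this splitting to the exponent produces exactly the $e^{-2|m_0|s/T}$ factor absorbed into $A_s$ together with the $e^{-M_s/\sqrt{t}}$ factor, as claimed. I expect this splitting inequality to be the only genuinely nontrivial step in the lemma.

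For (3), on $t \in (T/4, 3T/4)$ the parabola $t(T-t)$ attains its minimum value $3T^2/16$ at the two endpoints, so $\sqrt{t(T-t)} \ge \sqrt{3}\,T/4$. Combined with $\xi \ge 2n_0/T$, this immediately yields $\xi^{-6} \le T^6/(2n_0)^6$ and $-\alpha \le |m_0|/\sqrt{t(T-t)} \le 4|m_0|/(\sqrt{3}\,T)$, from which the stated exponential bound follows. Thus (1) and (3) are routine monotonicity and single-variable calculus, while (2) is the only part that requires the slightly refined scalar inequality above.
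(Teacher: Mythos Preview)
Your proposal is correct and follows essentially the same route as the paper: in each part you freeze the spatial dependence via the bounds $n_0\le\xi_0\le N_0$, $m_0\le\alpha_0\le M_0$ and reduce to a one-variable estimate, with the same splitting inequality $\tfrac{1}{\sqrt{t(T-t)}}\le\tfrac{1}{T}+\tfrac{1}{\sqrt{Tt}}$ as the key step in~(2) and the same parabola bound $t(T-t)\ge 3T^2/16$ in~(3). The only organisational difference is in~(1): the paper first shows the maximum in $t$ occurs at $t=T/2$ (which is where the hypothesis $s\ge 4T/|M_0|$ is used) and then maximises $G(s)=s^{16}e^{-4|M_0|s/T}$, whereas you take the unconstrained maximum of $u\mapsto u^{16}e^{-2s|M_0|u}$ directly, which is slightly cleaner and shows the bound in fact holds for all $s>0$; your remark about the maximiser ``lying in the feasible range'' is imprecise (for $s>4T/|M_0|$ it lies strictly below $2/T$), but this does not affect the argument.
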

\begin{proof}
\begin{enumerate}[(i)]
\item Let $\alpha_0,$ $\alpha,$ $N_0$ and $M_0$ be as in the statement. Then we have
\begin{align*}
s^{16}\xi^{16}e^{2s\alpha}\leq  (sN_0)^{16}e^{-\frac{2|M_0|s}{\sqrt{t(T-t)}}}t^{-8}(T-t)^{-8}=f_s(t)=\frac{1}{g_s(t)}
\end{align*}
for any $s>0$ and any $t\in (0,T).$ In what follows, we will give a lower bound of $g_s(t)$ on $(0,T).$ Thanks to
\begin{align*}
g'_s(t)=\frac{1}{(sN_0)^{16}}e^{\frac{2|M_0|s}{\sqrt{t(T-t)}}}t^{\frac{13}{2}}(T-t)^{\frac{13}{2}}(T-2t)\left\{8\sqrt{t(T-t)}-|M_0|s\right\},
\end{align*}
which implies that for any $s\geq \frac{4T}{|M_0|},$ the function $g_s$ is strictly decreasing in $(0,\frac{T}{2})$ and strictly increasing in $(\frac{T}{2},T).$ Thus, we have
\begin{align*}
f_s(t)\leq f_s(\frac{T}{2})=2^{16}T^{-16}N_0^{16}G(s)
\end{align*}
for any $t\in (0,T)$ with $G(s)=s^{16}e^{-\frac{4|M_0|s}{T}}.$ Thanks to
\begin{align*}
G'(s)=4s^{15}e^{-\frac{4|M_0|s}{T}}(4-\frac{|M_0|s}{T}),
\end{align*}
which entails that the function $G(s)$ is strictly decreases in $(\frac{4T}{|M_0|},+\infty).$ Thus, for every $s\geq \frac{4T}{|M_0|},$ we have
\begin{align*}
s^{16}\xi^{16}e^{2s\alpha}\leq 2^{16}T^{-16}N_0^{16}G(\frac{4T}{|M_0|})=2^{48}e^{-16}\left(\frac{N_0}{M_0}\right)^{16}
\end{align*}
for any $(x,t)\in Q.$

\item First of all, notice that
\begin{align*}
s^6\xi^6e^{2s\alpha}\geq (sn_0)^6e^{-\frac{2|m_0|s}{\sqrt{Tt}}}t^{-3}(T-t)^{-3}e^{-\frac{2|m_0|s}{\sqrt{t(T-t)}}+\frac{2|m_0|s}{\sqrt{Tt}}}
\end{align*}
and for any $s\geq 0,$
\begin{align*}
-\frac{2|m_0|s}{\sqrt{t(T-t)}}+\frac{2|m_0|s}{\sqrt{Tt}}=&-\frac{2|m_0|s\sqrt{t}}{\sqrt{T(T-t)}(\sqrt{T}+\sqrt{T-t})}\\
\geq&-\frac{2|m_0|s}{T}
\end{align*}
for any $t\in (0,\frac{T}{2}).$ Therefore, for any $s\geq 0,$ we obtain
\begin{align*}
s^6\xi^6e^{2s\alpha}\geq &(sn_0)^6e^{-\frac{2|m_0|s}{\sqrt{Tt}}}t^{-3}(T-t)^{-3}e^{-\frac{2|m_0|s}{T}}\\
\geq &\frac{(2sn_0)^6}{T^6}e^{-\frac{2|m_0|s}{T}}e^{-\frac{2|m_0|s}{\sqrt{Tt}}}
\end{align*}
for any $t\in (0,\frac{T}{2}).$

\item Thanks to 
\begin{align*}
\frac{1}{\sqrt{t(T-t)}}\leq \frac{4}{\sqrt{3}T}
\end{align*}
for any $t\in (\frac{T}{4},\frac{3T}{4}),$ we obtain
\begin{align*}
\xi^{-6}e^{-2s\alpha}\leq n_0^{-6}e^{\frac{2|m_0|s}{\sqrt{t(T-t)}}}2^{-6}T^6\leq (2n_0)^{-6}T^6e^{\frac{8|m_0|s}{\sqrt{3}T}}
\end{align*}
for any $(x,t)\in\Omega\times (\frac{T}{4},\frac{3T}{4}).$
\end{enumerate}
\end{proof}
\section{\bf The linear case}
\def\theequation{3.\arabic{equation}}\makeatother
\setcounter{equation}{0}
In this section, we will always assume that $F\equiv 0$ and prove the existence of an insensitizing control of problem \eqref{eq1.1} such that \eqref{eq1.3} holds. To start with, we introduce the adjoint problem of the linearized system of problem \eqref{eq1.4}:
\begin{equation}\label{eq3.1}
\begin{cases}
-\frac{\partial \psi}{\partial t}+\Delta^2 \psi+a_0\psi-\nabla\cdot(B_0 \psi)+\sum_{i,j=1}^n\frac{\partial^2(B_{ij}\psi)}{\partial x_i\partial x_j}+\Delta(a_1\psi)=\chi_{\mathcal{O}} \varphi,\,\,\,\,\forall\,\,\,(x,t)\in Q,\\
\frac{\partial \varphi}{\partial t}+\Delta^2 \varphi+a_0\varphi+B_0\cdot\nabla\varphi+B:\nabla^2\varphi+a_1\Delta\varphi=0,\,\,\,\,\forall\,\,\,(x,t)\in Q,\\
\psi=\Delta \psi=0,\,\,\,\varphi=\Delta \varphi=0,\,\,\,\,\,\forall\,\,\,\,(x,t)\in\Sigma,\\
\psi(x,T)=0,\,\,\,\varphi(x,0)=\varphi_0(x),\,\,\,\,\forall\,\,\,\,x\in D.
\end{cases}
\end{equation}
From the regularity of fourth order parabolic equations, we conclude that for any $\varphi_0\in L^2(D),$ there exists a unique solution of problem \eqref{eq3.1} satisfying
\begin{align*}
\varphi,\psi\in X=L^2(0,T; H_0^1(D)\cap H^2(D))\cap H^1(0,T;(H^2(D))^*).
\end{align*}
In what follows, we will establish an observability inequality of problem \eqref{eq3.1}, which used to obtain the existence of an insensitizing  control such that the solution of problem \eqref{eq1.1} verifying \eqref{eq1.3} in the linear case.
\begin{theorem}\label{th3.1}
Assume that $\omega\cap \mathcal{O}\neq\emptyset.$ Then there exist two positive constants $M$ and $H,$ such that for any $\varphi_0\in L^2(D),$ the corresponding solution $(\psi,\varphi)$ of problem \eqref{eq3.1} with initial data $(0,\varphi_0)$ satisfies
\begin{align}\label{eq3.0}
\int_Qe^{-\frac{M}{\sqrt{t}}}|\psi|^2\,dxdt\leq H\int_{Q_\omega}|\psi|^2\,dxdt.
\end{align}
More precisely, $M=\frac{2|m_0|s}{\sqrt{T}}$ and
\begin{align*}
H=C2^{48}\left(\frac{N_0}{M_0e}\right)^{16}+C2^{42}\left(\frac{N_0}{M_0e}\right)^{16}e^{2\beta T+\frac{8|m_0|s}{\sqrt{3}T}}n_0^{-6}T^6
\end{align*}
for any $s\geq \frac{4T}{|M_0|},$ where $C=C(D,\omega,\mathcal{O}).$
\end{theorem}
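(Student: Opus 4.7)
The plan is to derive the observability inequality by first applying a global Carleman estimate to each of the two components, then using a cut-off argument (which relies on $\omega\cap\mathcal{O}\neq\emptyset$) to replace the observation of $\varphi$ by that of $\psi$, and finally converting the resulting Carleman-weighted estimate into the desired form by means of Lemma \ref{le2.3} together with standard parabolic energy arguments.

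For the Carleman step, I would first apply Lemma \ref{le2.2}(ii) to $\psi$, writing the source of its backward equation as $g_0+\sum_i\partial_i g_i-\sum_{i,j}\partial_i\partial_j(B_{ij}\psi)-\Delta(a_1\psi)$ with $g_0=\chi_\mathcal{O}\varphi-a_0\psi$ and $g_i=B_{0i}\psi$. Choosing $s$ and $\lambda$ large enough absorbs the $|\psi|^2$ and $|\nabla\psi|^2$ contributions of $|g_0|^2$ and $(s\lambda\xi)^2|g_i|^2$ into the dominating LHS weights $s^6\lambda^8\xi^6$ and $s^4\lambda^6\xi^4$, producing
\begin{align*}
I(\psi):=\int_Q s^6\lambda^8\xi^6 e^{2s\alpha}|\psi|^2\,dx\,dt\leq C\!\int_{Q_\mathcal{O}}\!\!e^{2s\alpha}|\varphi|^2\,dx\,dt+C\!\int_{Q_\omega}\!\!s^7\lambda^8\xi^7 e^{2s\alpha}|\psi|^2\,dx\,dt.
\end{align*}
Reversing time (so that $\varphi(\cdot,T-\cdot)$ solves a backward equation; note that $\alpha,\xi$ are symmetric about $t=T/2$), I would apply Lemma \ref{le2.2}(i) to $\varphi$ with the lower-order terms $a_0\varphi+B_0\cdot\nabla\varphi+B:\nabla^2\varphi+a_1\Delta\varphi$ absorbed into the LHS; with a smaller observation set $\omega_1\subset\subset\omega\cap\mathcal{O}$ this yields
\begin{align*}
I(\varphi):=\int_Q s^6\lambda^8\xi^6 e^{2s\alpha}|\varphi|^2\,dx\,dt\leq C\int_{Q_{\omega_1}}s^7\lambda^8\xi^7 e^{2s\alpha}|\varphi|^2\,dx\,dt.
\end{align*}

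To eliminate the $\varphi$-observation I would exploit the fact that the $\psi$-equation expresses $\chi_\mathcal{O}\varphi$ as a linear combination of $\psi$ and its derivatives. I would pick $\chi\in\mathcal{C}^\infty_c(\omega\cap\mathcal{O})$ with $\chi\equiv 1$ on $\omega_1$ and test the $\psi$-equation against $\chi\rho\varphi$ with $\rho=s^7\lambda^8\xi^7 e^{2s\alpha}$. On the support of $\chi$ the LHS equals $\int \chi\rho|\varphi|^2$; integrating by parts to move the fourth-order and first-order spatial derivatives and the time derivative onto $\chi\rho\varphi$ (the $t$-boundary terms vanish because $e^{2s\alpha}$ vanishes at $t=0,T$), and using $|\partial_t\rho|\lesssim s\xi\,\rho$ and $|\nabla^k\rho|\lesssim(s\lambda\xi)^k\rho$ followed by Cauchy--Schwarz, produces a bound of the form $C_\varepsilon\int_{Q_\omega}s^{16}\lambda^{*}\xi^{16}e^{2s\alpha}|\psi|^2+\varepsilon I(\varphi)$. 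Absorbing the $\varepsilon$-part into the LHS of the $\varphi$-Carleman, and likewise absorbing $\int_{Q_\mathcal{O}}e^{2s\alpha}|\varphi|^2\leq Cs^{-6}I(\varphi)$ appearing in the $\psi$-Carleman, yields the combined estimate
\begin{align*}
I(\psi)+I(\varphi)\leq C\int_{Q_\omega}s^{16}\lambda^{*}\xi^{16}e^{2s\alpha}|\psi|^2\,dx\,dt.
\end{align*}

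Finally, to convert this into the stated inequality I would split $\int_0^T=\int_0^{T/2}+\int_{T/2}^T$. On $(0,T/2)$, Lemma \ref{le2.3}(2) gives $s^6\xi^6 e^{2s\alpha}\geq A_s e^{-M_s/\sqrt{t}}$, so $\int_0^{T/2}\!\!\int_D e^{-M_s/\sqrt{t}}|\psi|^2\leq A_s^{-1}I(\psi)$. On $(T/2,T)$, $e^{-M_s/\sqrt{t}}\leq 1$ and a backward energy estimate on $\psi$ together with a forward energy estimate on $\varphi$ (both controlled by the $L^\infty$ norms of the coefficients, yielding a constant $\beta$) give $\int_{T/2}^T\|\psi\|_{L^2}^2\leq Ce^{2\beta T}\int_{T/4}^{3T/4}\|\varphi\|_{L^2}^2$; on the middle interval Lemma \ref{le2.3}(3) bounds $\xi^{-6}e^{-2s\alpha}$ by $(2n_0)^{-6}T^6 e^{8|m_0|s/(\sqrt{3}T)}$, so that this last integral is dominated by $(2n_0)^{-6}T^6 e^{8|m_0|s/(\sqrt{3}T)} s^{-6}I(\varphi)$. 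Lemma \ref{le2.3}(1) then bounds the weight $s^{16}\xi^{16}e^{2s\alpha}$ on $Q_\omega$ by $2^{48}(N_0/(M_0 e))^{16}$ for $s\geq 4T/|M_0|$, and assembling all of the above produces exactly the two-term expression for $H$ stated in the theorem, with $M=M_s=2|m_0|s/\sqrt{T}$. The main technical obstacle is the cut-off step: the integration by parts against the fourth-order operator generates many derivative terms on $\chi\rho\varphi$, and keeping track of the powers of $s,\lambda,\xi$ so that everything absorbs cleanly and the final weight on $\psi$ fits exactly under Lemma \ref{le2.3}(1) is the most delicate bookkeeping; everything else reduces to the cited Carleman estimates and standard energy arguments.
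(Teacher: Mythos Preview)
Your proposal is correct and follows essentially the same route as the paper: Carleman estimates for $\varphi$ and $\psi$ separately (Lemma~\ref{le2.2}(i) and (ii)), a cut-off argument testing the $\psi$-equation against $\theta\,s^7\lambda^8\xi^7e^{2s\alpha}\varphi$ to trade the local $\varphi$-term for a local $\psi$-term with weight $s^{16}\xi^{16}e^{2s\alpha}$, and then the conversion via Lemma~\ref{le2.3} combined with backward/forward energy estimates shifting the $\varphi$-integral to $(T/4,3T/4)$. The only cosmetic difference is that the paper nests two open sets $\omega_1\subset\subset\omega_2\subset\subset\omega\cap\mathcal{O}$ and runs the $\psi$-Carleman with observation in $\omega_2$, whereas you run it in $\omega$ directly; both choices lead to the same final inequality.
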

\begin{proof}
Let $\omega_1$ and $\omega_2$ be two open subsets such that $\omega_1\subset\subset\omega_2\subset\subset\omega\cap\mathcal{O}.$ Applying Lemma \ref{le2.2} to the second equation of problem \eqref{eq3.1} with $g=-a_0\varphi-B_0\cdot\nabla\varphi-B:\nabla^2 \varphi-a_1\Delta\varphi$ and $\omega=\omega_1,$ we conclude that there exists a positive constant $\hat{\lambda},$ such that for any $\lambda\geq \hat{\lambda},$ we can choose $s_0=s_0(\lambda)$ satisfying: there exist a positive constant $C_1=C_1(D,\omega_1),$ such that
\begin{align*}
&\int_Qe^{2s\alpha}\left(s^6\lambda^8\xi^6|\varphi|^2+s^4\lambda^6\xi^4|\nabla \varphi|^2+s^3\lambda^4\xi^3|\Delta \varphi|^2+s^2\lambda^4\xi^2|\nabla^2\varphi|^2+s\lambda^2\xi|\nabla\Delta \varphi|^2\right)\,dxdt\\
\leq&C_1\int_Q\left(|a_0|^2|\varphi|^2+|B_0|^2|\nabla\varphi|^2+|B|^2|\nabla^2\varphi|^2+|a_1|^2|\Delta\varphi|^2\right)e^{2s\alpha}\,dxdt\\
&+C_1\int_{Q_{\omega_1}}s^7\lambda^8\xi^7|\varphi|^2e^{2s\alpha}\,dxdt
\end{align*}
for any $s\geq s_0(\lambda)(T+\sqrt{T}),$ which implies that
\begin{align}\label{eq3.2}
\nonumber&\int_Qe^{2s\alpha}\left(s^6\lambda^8\xi^6|\varphi|^2+s^4\lambda^6\xi^4|\nabla \varphi|^2+s^3\lambda^4\xi^3|\Delta \varphi|^2+s^2\lambda^4\xi^2|\nabla^2\varphi|^2+s\lambda^2\xi|\nabla\Delta \varphi|^2\right)\,dxdt\\
\leq&C\int_{Q_{\omega_1}}s^7\lambda^8\xi^7|\varphi|^2e^{2s\alpha}\,dxdt
\end{align}
for any $\lambda\geq \hat{\lambda}(1+\|a_0\|_{L^{\infty}(Q)}^{\frac{1}{4}}+\|B_0\|_{L^{\infty}(Q)}^{\frac{1}{3}}+\|B\|_{L^{\infty}(Q)}^{\frac{1}{2}}+\|a_1\|_{L^{\infty}(Q)}^{\frac{1}{2}})$ and any $s\geq s_0(\lambda)(T+\sqrt{T}).$

Employing again Lemma \ref{le2.2} to the first equation of problem \eqref{eq3.1} with $g=-a_0\psi+\nabla\cdot(B_0 \psi)-\sum_{i,j=1}^n\frac{\partial^2(B_{ij}\psi)}{\partial x_i\partial x_j}-\Delta(a_1\psi)$ and $\omega=\omega_2,$ we deduce that there exists a positive constant $\hat{\lambda},$ such that for any $\lambda\geq \hat{\lambda},$ we can choose $s_0=s_0(\lambda)$ satisfying: there exist a positive constant $C_2=C_2(D,\omega_2),$ such that
\begin{align*}
&\int_Qe^{2s\alpha}\left(s^6\lambda^8\xi^6|\psi|^2+s^4\lambda^6\xi^4|\nabla\psi|^2+s^2\lambda^4\xi^2|\Delta\psi|^2+s^2\lambda^4\xi^2|\nabla^2\psi|^2\right)\,dxdt\\
&\leq C_2\left(\int_{Q_{\omega_2}}s^7\lambda^8\xi^7|\psi|^2e^{2s\alpha}\,dxdt+\int_Q\left(|a_0|^2|\psi|^2+(s\lambda\xi)^2|B_0|^2|\psi|^2+\chi_{\mathcal{O}} |\varphi|^2\right)e^{2s\alpha}\,dxdt\right),
\end{align*}
for any $\lambda\geq \hat{\lambda}(1+\|B\|_{L^{\infty}(Q)}^{\frac{1}{2}}+\|a_1\|_{L^{\infty}(Q)}^{\frac{1}{2}})$  and any $s\geq s_0(\lambda)(\sqrt{T}+T),$ which entails that
\begin{align}\label{eq3.3}
\nonumber&\int_Qe^{2s\alpha}\left(s^6\lambda^8\xi^6|\psi|^2+s^4\lambda^6\xi^4|\nabla\psi|^2+s^2\lambda^4\xi^2|\Delta\psi|^2+s^2\lambda^4\xi^2|\nabla^2\psi|^2\right)\,dxdt\\
&\leq C\left(\int_{Q_{\omega_2}}s^7\lambda^8\xi^7|\psi|^2e^{2s\alpha}\,dxdt+\int_0^T\int_{\mathcal{O}}|\varphi|^2e^{2s\alpha}\,dxdt\right)
\end{align}
for any $\lambda\geq \hat{\lambda}(\lambda)(1+\|a_0\|_{L^{\infty}(Q)}^{\frac{1}{4}}+\|B_0\|_{L^{\infty}(Q)}^{\frac{1}{3}}+\|B\|_{L^{\infty}(Q)}^{\frac{1}{2}}+\|a_1\|_{L^{\infty}(Q)}^{\frac{1}{2}})$ and any $s\geq s_0(T+\sqrt{T}).$
In what follows, we will prove an inequality which bounds $\varphi$ with respect to $\psi.$ Let $\theta_1\in\mathcal{C}_0^{\infty}(\omega_2)$ be a cut-off function such that
\begin{align}\label{eq3.4}
0\leq\theta_1\leq 1,\,\,\,\textit{in}\,\,\,\omega_2;\,\,\,\theta_1\equiv 1,\,\,\,\forall\,\,\,x\in\omega_1.
\end{align}
Define 
\begin{align*}
u=s^7\lambda^8\xi^7e^{2s\alpha}
\end{align*}
 for any $\lambda\geq \hat{\lambda}(1+\|a_0\|_{L^{\infty}(Q)}^{\frac{1}{4}}+\|B_0\|_{L^{\infty}(Q)}^{\frac{1}{3}}+\|B\|_{L^{\infty}(Q)}^{\frac{1}{2}}+\|a_1\|_{L^{\infty}(Q)}^{\frac{1}{2}})$ and any $s\geq s_0(\lambda)(T+\sqrt{T}).$ Multiplying the first equation of problem \eqref{eq3.1} by $u\varphi\theta_1$ and integrating by parts, we obtain
 \begin{align}\label{eq3.5}
\nonumber&\int_0^T\int_{\mathcal{O}} s^7\lambda^8\xi^7|\varphi|^2e^{2s\alpha}\theta_1\,dxdt=\int_Q\psi u_t\theta_1\varphi+4\nabla\Delta\varphi\cdot\nabla(u\theta_1)\psi+2\Delta\varphi\Delta(u\theta_1)\psi\,dxdt\\
\nonumber&+\int_Q4\nabla^2\varphi:\nabla^2(u\theta_1)\psi+4\nabla\varphi\cdot\nabla\Delta(u\theta_1)\psi+\Delta^2(u\theta_1)\varphi\psi+B_0\cdot\nabla(u\theta_1)\varphi\psi\,dxdt\\
\nonumber&+\int_Q\sum_{i,j=1}^n\left(B_{ij}\frac{\partial\varphi}{\partial x_i}\frac{\partial (u\theta_1)}{\partial x_j}\psi+B_{ij}\frac{\partial\varphi}{\partial x_j}\frac{\partial (u\theta_1)}{\partial x_i}\psi+B_{ij}\frac{\partial^2 (u\theta_1)}{\partial x_i\partial x_j}\psi\varphi\right)\,dxdt\\
&+\int_Q\left(2a_1\nabla\varphi\cdot\nabla(u\theta_1)\psi+a_1\varphi\psi\Delta(u\theta_1)\right)\,dxdt=:\sum_{i=1}^{12}I_i.
\end{align}
 In what follows, let $C$ be a positive constant depending only on $D,$ $\omega_1$ and $\omega_2,$ which may change from one line to another, we will estimate each $I_i$ in inequality \eqref{eq3.5} for $1\leq i\leq 12$ by H\"{o}lder's inequality, Young's inequality along with inequality \eqref{eq3.2}.
 
 To begin with, we conclude from the properties of weight functions \eqref{eq2.3} that
 \begin{align*}
|u_t|\leq &Cs^{10}\lambda^8\xi^{10}e^{2s\alpha},\\
|\nabla^k(u\theta_1)|\leq&C(s^{7+k}\lambda^{8+k}\xi^{7+k})e^{2s\alpha}\chi_{\omega_2},\,\,\,\forall\,\,k\in\mathbb{Z}_+
 \end{align*}
 for any $\lambda\geq\hat{\lambda}$ and any $s\geq s_0(1+\sqrt{T}+T).$ Thus, we obtain
 \begin{align}\label{eq3.6}
  \nonumber|I_1|\leq &C\int_Qs^{10}\lambda^8\xi^{10}|\varphi||\psi|\theta_1e^{2s\alpha}\,dxdt\\
 \leq&\frac{1}{14}\int_Qs^7\lambda^8\xi^7|\varphi|^2\theta_1e^{2s\alpha}\,dxdt+C\int_{Q_{\omega_2}}s^{13}\lambda^8\xi^{13}|\psi|^2e^{2s\alpha}\,dxdt,
  \end{align}
  
  \begin{align}\label{eq3.7}
  \nonumber |I_2|+|I_3|+|I_4|\leq&C\int_{Q_{\omega_2}}\left(s^8\lambda^9\xi^8|\nabla\Delta\varphi||\psi|+s^9\lambda^{10}\xi^9|\Delta\varphi||\psi|\right)e^{2s\alpha}\,dxdt\\
 \nonumber\leq&C\left(\int_Q\left(s\lambda^2\xi|\nabla\Delta \varphi|^2+s^3\lambda^4\xi^3|\Delta \varphi|^2\right)e^{2s\alpha}\,dxdt\right)^{\frac{1}{2}}\left(\int_{Q_{\omega_2}}s^{15}\lambda^{16}\xi^{15}|\psi|^2e^{2s\alpha}\,dxdt\right)^{\frac{1}{2}}\\
  \leq&\frac{1}{14}\int_Qs^7\lambda^8\xi^7|\varphi|^2\theta_1e^{2s\alpha}\,dxdt+C\int_{Q_{\omega_2}}s^{15}\lambda^{16}\xi^{15}|\psi|^2e^{2s\alpha}\,dxdt,
  \end{align}

  \begin{align}\label{eq3.8}
  \nonumber|I_5|+|I_6|\leq&C\int_{Q_{\omega_2}}\left(s^{10}\lambda^{11}\xi^{10}|\nabla\varphi||\psi|+s^{11}\lambda^{12}\xi^{11}|\varphi||\psi|\right)e^{2s\alpha}\,dxdt\\
 \nonumber\leq&C\left(\int_Q\left(s^4\lambda^6\xi^4|\nabla \varphi|^2+s^6\lambda^8\xi^6|\varphi|^2\right)e^{2s\alpha}\,dxdt\right)^{\frac{1}{2}}\left(\int_{Q_{\omega_2}}s^{16}\lambda^{16}\xi^{16}|\psi|^2e^{2s\alpha}\,dxdt\right)^{\frac{1}{2}}\\
  \leq&\frac{1}{14}\int_Qs^7\lambda^8\xi^7|\varphi|^2\theta_1e^{2s\alpha}\,dxdt+C\int_{Q_{\omega_2}}s^{16}\lambda^{16}\xi^{16}|\psi|^2e^{2s\alpha}\,dxdt,
  \end{align}

  \begin{align}\label{eq3.9}
  \nonumber|I_7|\leq&C\int_{Q_{\omega_2}}|B_0|s^8\lambda^9\xi^8|\varphi||\psi|e^{2s\alpha}\,dxdt\\
 \nonumber\leq&C\|B_0\|_{L^{\infty}(Q)}\left(\int_Qs^6\lambda^8\xi^6|\varphi|^2e^{2s\alpha}\,dxdt\right)^{\frac{1}{2}}\left(\int_{Q_{\omega_2}}s^{10}\lambda^{10}\xi^{10}|\psi|^2e^{2s\alpha}\,dxdt\right)^{\frac{1}{2}}\\
  \leq&\frac{1}{14}\int_Qs^7\lambda^8\xi^7|\varphi|^2\theta_1e^{2s\alpha}\,dxdt+C\|B_0\|_{L^{\infty}(Q)}^2\int_{Q_{\omega_2}}s^{10}\lambda^{10}\xi^{10}|\psi|^2e^{2s\alpha}\,dxdt,  \end{align}

  \begin{align}\label{eq3.10}
   \nonumber|I_8|+|I_9|\leq&C\int_{Q_{\omega_2}}|B|s^8\lambda^9\xi^8|\nabla\varphi||\psi|e^{2s\alpha}\,dxdt\\
 \nonumber\leq&C\|B\|_{L^{\infty}(Q)}\left(\int_Qs^4\lambda^6\xi^4|\nabla\varphi|^2e^{2s\alpha}\,dxdt\right)^{\frac{1}{2}}\left(\int_{Q_{\omega_2}}s^{12}\lambda^{12}\xi^{12}|\psi|^2e^{2s\alpha}\,dxdt\right)^{\frac{1}{2}}\\
  \leq&\frac{1}{14}\int_Qs^7\lambda^8\xi^7|\varphi|^2\theta_1e^{2s\alpha}\,dxdt+C\|B\|_{L^{\infty}(Q)}^2\int_{Q_{\omega_2}}s^{12}\lambda^{12}\xi^{12}|\psi|^2e^{2s\alpha}\,dxdt,    
  \end{align}

  \begin{align}\label{eq3.11}
 \nonumber |I_{10}|\leq&C\int_{Q_{\omega_2}}|B|s^9\lambda^{10}\xi^9|\psi||\varphi|e^{2s\alpha}\,dxdt\\
  \leq&\frac{1}{14}\int_Qs^7\lambda^8\xi^7|\varphi|^2\theta_1e^{2s\alpha}\,dxdt+C\|B\|_{L^{\infty}(Q)}^2\int_{Q_{\omega_2}}s^{12}\lambda^{12}\xi^{12}|\psi|^2e^{2s\alpha}\,dxdt,  
   \end{align}

  \begin{align}\label{eq3.12}
  \nonumber&|I_{11}|+|I_{12}|\leq C\int_{Q_{\omega_2}}|a_1|(s^8\lambda^9\xi^8|\nabla\varphi||\psi|+s^9\lambda^{10}\xi^9|\varphi||\psi|)e^{2s\alpha}\,dxdt \\
\nonumber\leq&C\|a_1\|_{L^{\infty}(Q)}\left(\int_Q\left(s^6\lambda^8\xi^6|\varphi|^2+s^4\lambda^6\xi^4|\nabla\varphi|^2\right)e^{2s\alpha}\,dxdt\right)^{\frac{1}{2}}\left(\int_{Q_{\omega_2}}s^{12}\lambda^{12}\xi^{12}|\psi|^2e^{2s\alpha}\,dxdt\right)^{\frac{1}{2}}\\
  \leq&\frac{1}{14}\int_Qs^7\lambda^8\xi^7|\varphi|^2\theta_1e^{2s\alpha}\,dxdt+C\|a_1\|_{L^{\infty}(Q)}^2\int_{Q_{\omega_2}}s^{12}\lambda^{12}\xi^{12}|\psi|^2e^{2s\alpha}\,dxdt.  \end{align}
Therefore, we deduce from inequalities \eqref{eq3.6}-\eqref{eq3.12} that
\begin{align}\label{eq3.13}
\int_0^T\int_{\mathcal{O}} s^7\lambda^8\xi^7|\varphi|^2e^{2s\alpha}\theta_1\,dxdt
\leq C\int_{Q_{\omega_2}}s^{16}\lambda^{16}\xi^{16}|\psi|^2e^{2s\alpha}\,dxdt
\end{align}  
for any $\lambda\geq\hat{\lambda}(1+\|a_0\|_{L^{\infty}(Q)}^{\frac{1}{4}}+\|B_0\|_{L^{\infty}(Q)}^{\frac{1}{3}}+\|B\|_{L^{\infty}(Q)}^{\frac{1}{2}}+\|a_1\|_{L^{\infty}(Q)}^{\frac{1}{2}})$ and any $s\geq s_0(1+\sqrt{T}+T).$ 

Thus, in view of inequality \eqref{eq3.2} and inequality \eqref{eq3.13}, yields
\begin{align}\label{eq3.14}
\nonumber\int_Qe^{2s\alpha}s^6\xi^6|\varphi|^2\leq &C_1\int_{Q_{\omega_1}}s^7\xi^7|\varphi|^2e^{2s\alpha}\,dxdt\\
\nonumber\leq&\int_0^T\int_{\mathcal{O}} s^7\xi^7|\varphi|^2e^{2s\alpha}\theta_1\,dxdt\\
\leq&C\int_{Q_{\omega_2}}s^{16}\xi^{16}|\psi|^2e^{2s\alpha}\,dxdt
\end{align}
for any fixed $\lambda\geq\hat{\lambda}(1+\|a_0\|_{L^{\infty}(Q)}^{\frac{1}{4}}+\|B_0\|_{L^{\infty}(Q)}^{\frac{1}{3}}+\|B\|_{L^{\infty}(Q)}^{\frac{1}{2}}+\|a_1\|_{L^{\infty}(Q)}^{\frac{1}{2}})$ and any $s\geq s_0(1+\sqrt{T}+T).$

Combining inequalities \eqref{eq3.3} with inequality \eqref{eq3.14}, we obtain
\begin{align}\label{eq3.15}
\int_Qs^6\xi^6|\psi|^2e^{2s\alpha}\,dxdt
\leq C\int_{Q_{\omega_2}}s^{16}\xi^{16}|\psi|^2e^{2s\alpha}\,dxdt
\end{align}
for any fixed $\lambda\geq\hat{\lambda}(1+\|a_0\|_{L^{\infty}(Q)}^{\frac{1}{4}}+\|B_0\|_{L^{\infty}(Q)}^{\frac{1}{3}}+\|B\|_{L^{\infty}(Q)}^{\frac{1}{2}}+\|a_1\|_{L^{\infty}(Q)}^{\frac{1}{2}})$ and any $s\geq s_0(1+\sqrt{T}+T).$ 

Finally, we will combining energy estimates with inequalities \eqref{eq3.14}-\eqref{eq3.15} to obtain the desired observability inequality. At this point, applying classical estimates of the fourth order parabolic equation to systems \eqref{eq3.1}, we obtain for any $t_1,$ $t_2\in [0,T]$ with $t_1<t_2$ and any $t\in[0,T],$
\begin{align}\label{eq3.16}
\|\varphi(t_2)\|_{L^2(D)}^2\leq e^{2\beta(t_2-t_1)}\|\varphi(t_1)\|_{L^2(D)}^2
\end{align}
and
\begin{align}\label{eq3.17}
\|\psi(t)\|_{L^2(D)}^2\leq \int_t^Te^{2\beta(s-t)}\|\varphi(s)\|_{L^2(\mathcal{O})}^2\,ds,
\end{align}
where
\begin{align*}
\beta=2+\|a_0\|_{L^{\infty}(Q)}^2+\|B_0\|_{L^{\infty}(Q)}^2+\|B\|_{L^{\infty}(Q)}^2+\|a_1\|_{L^{\infty}(Q)}^2.
\end{align*}
In particular, we have
\begin{align*}
\|\varphi(t+\frac{T}{4})\|_{L^2(D)}^2\leq e^{\frac{\beta T}{2}}\|\varphi(t)\|_{L^2(D)}^2
\end{align*}
for any $t\in [\frac{T}{4},\frac{3T}{4}],$ which implies that
\begin{align}\label{eq3.18}
\int_{\frac{T}{2}}^T\|\varphi(t)\|_{L^2(D)}^2\,dt\leq e^{\frac{\beta T}{2}}\int_{\frac{T}{4}}^{\frac{3T}{4}}\|\varphi(t)\|_{L^2(D)}^2.
\end{align}
On the other hand, we deduce from inequality \eqref{eq3.17} that
\begin{align*}
\int_t^T\|\psi(s)\|_{L^2(D)}^2\,ds\leq (T-t)e^{\beta T}\int_t^T\|\varphi(s)\|_{L^2(\mathcal{O})}^2\,ds
\end{align*}
for any $t\in [\frac{T}{2},T],$ which entails that
\begin{align}\label{eq3.19}
\int_{\frac{T}{2}}^T\|\psi(s)\|_{L^2(D)}^2\,ds\leq e^{(1+\beta)T}\int_{\frac{T}{2}}^T\|\varphi(s)\|_{L^2(\mathcal{O})}^2\,ds.
\end{align}
Denote by $m_0=\min\limits_{x\in\overline{D}}\alpha_0(x)$ and $M_0=\max\limits_{x\in\overline{D}}\alpha_0(x),$ we deduce from Lemma \ref{le2.3} that for any $s\geq 0,$
\begin{align}\label{eq3.20}
\int_Qs^6\xi^6|\psi|^2e^{2s\alpha}\,dxdt\geq A_s\int_0^{\frac{T}{2}}\int_De^{-\frac{M_s}{\sqrt{t}}}|\psi|^2\,dxdt
\end{align}
with $A_s$ and $M_s$ given in Lemma \ref{le2.3}.  

In what follows, we will Bounding the right hand side of inequality \eqref{eq3.15} by using Lemma \ref{le2.3}, we obtain
\begin{align}\label{eq3.21}
\nonumber\int_Qs^6\xi^6|\psi|^2e^{2s\alpha}\,dxdt\leq &C\int_{Q_{\omega_2}}s^{16}\xi^{16}|\psi|^2e^{2s\alpha}\,dxdt\\
\leq&C2^{48}\left(\frac{N_0}{M_0e}\right)^{16}\int_{Q_{\omega_2}}|\psi|^2\,dxdt
\end{align}
for any $s\geq\frac{4T}{|M_0|}.$ Thus, we obtain
\begin{align}\label{eq3.22}
\int_0^{\frac{T}{2}}\int_De^{-\frac{M_s}{\sqrt{t}}}|\psi|^2\,dxdt
\leq C\frac{2^{30}}{e^{16}}e^{\frac{2|m_0|s}{T}}\left(\frac{N_0^{16}}{M_0^{10}n_0^6}\right)\int_{Q_{\omega_2}}|\psi|^2\,dxdt.
\end{align}
Along with inequality \eqref{eq3.19} and inequality \eqref{eq3.22}, yields
\begin{align}\label{eq3.23}
\nonumber\int_Qe^{-\frac{M_s}{\sqrt{t}}}|\psi|^2\,dxdt\leq&\int_0^{\frac{T}{2}}\int_De^{-\frac{M_s}{\sqrt{t}}}|\psi|^2\,dxdt+\int_{\frac{T}{2}}^T\int_D|\psi|^2\,dxdt\\
\nonumber\leq&C2^{48}\left(\frac{N_0}{M_0e}\right)^{16}\int_{Q_{\omega_2}}|\psi|^2\,dxdt\\
&+e^{(1+\beta)T}\int_{\frac{T}{2}}^T\|\varphi(s)\|_{L^2(\mathcal{O})}^2\,ds.
\end{align}
In view of inequalities \eqref{eq3.15}, \eqref{eq3.18} and Lemma \ref{le2.3}, yields
\begin{align}\label{eq3.24}
\nonumber\int_{\frac{T}{2}}^T\|\varphi(s)\|_{L^2(D)}^2\,ds\leq&e^{\frac{\beta T}{2}}\int_{\frac{T}{4}}^{\frac{3T}{4}}\|\varphi(t)\|_{L^2(D)}^2\\
\nonumber\leq&e^{\frac{\beta T}{2}+\frac{8|m_0|s}{\sqrt{3}T}}(2n_0)^{-6}T^6\int_{\frac{T}{4}}^{\frac{3T}{4}}\int_D\xi^6e^{2s\alpha}|\varphi(t)|^2\,dxdt\\
\nonumber\leq&Ce^{\frac{\beta T}{2}+\frac{8|m_0|s}{\sqrt{3}T}}(2n_0)^{-6}T^6\int_{\frac{T}{4}}^{\frac{3T}{4}}\int_{Q_{\omega_2}}s^{16}\xi^{16}|\psi|^2e^{2s\alpha}\,dxdt\\
\leq&C2^{42}\left(\frac{N_0}{M_0}\right)^{16}e^{\frac{\beta T}{2}+\frac{8|m_0|s}{\sqrt{3}T}-16}n_0^{-6}T^6\int_{\frac{T}{4}}^{\frac{3T}{4}}\int_{Q_{\omega_2}}|\psi|^2\,dxdt
\end{align}
for any $s\geq \frac{4T}{|M_0|}.$

Combining inequality \eqref{eq3.23} with inequality \eqref{eq3.24}, we obtain
\begin{align}\label{eq3.25}
\int_Qe^{-\frac{M_s}{\sqrt{t}}}|\psi|^2\,dxdt\leq H\int_{Q_{\omega}}|\psi|^2\,dxdt
\end{align}
for any $s\geq \frac{4T}{|M_0|},$ where 
\begin{align*}
H=C2^{48}\left(\frac{N_0}{M_0e}\right)^{16}+C2^{42}\left(\frac{N_0}{M_0e}\right)^{16}e^{2\beta T+\frac{8|m_0|s}{\sqrt{3}T}}n_0^{-6}T^6.
\end{align*}
\end{proof}
In the following, we will prove the existence of an insensitizing control such that the solution of problem \eqref{eq1.1} verifies condition \eqref{eq1.3}, i.e., we will prove the null-controllability of problem \eqref{eq1.4}.
%
%
%
%
%
\begin{theorem}\label{th3.2}
Assume that $\omega\cap \mathcal{O}\neq \emptyset,$ $y_0=0$ and the positive constants $M$ and $H$ are defined as in Theorem \ref{th3.1}. If $f\in L^2(Q)$ satisfies
\begin{align*}
\int_Qe^{\frac{M}{\sqrt{t}}}|f|^2\,dxdt<+\infty,
\end{align*}
then there exists a control $v\in L^2(Q_\omega),$ such that the solution $(y,q)$ of problem \eqref{eq1.4} satisfies
\begin{align}\label{eq3.26}
q(x,0)\equiv 0,\,\,\forall\,\,x\in D.
\end{align}
Moreover, we also have
\begin{align*}
\|v\|_{L^2(Q_\omega)}\leq2\sqrt{H}\left(\int_Q e^{\frac{M}{\sqrt{t}}}|f|^2\,dxdt\right)^{\frac{1}{2}}.
\end{align*}
%
\end{theorem}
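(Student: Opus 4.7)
The plan is to recast the partial null-controllability condition \eqref{eq3.26} as a dual variational problem, solvable via the observability inequality \eqref{eq3.0} established in Theorem \ref{th3.1}. This is the classical Hilbert Uniqueness Method (HUM) with a weight, adapted so that the constants match the explicit $2\sqrt H$ in the statement. First I would derive the duality identity: for any $v\in L^2(Q_\omega)$ and any $\varphi_0\in L^2(D)$, let $(y,q)$ solve \eqref{eq1.4} with $F\equiv 0$, $y_0=0$, and let $(\psi,\varphi)$ solve \eqref{eq3.1}. Multiplying the $y$-equation by $\psi$ and the $q$-equation by $\varphi$, integrating by parts in $Q$ using the homogeneous Dirichlet conditions on $y,\Delta y,\psi,\Delta\psi,q,\Delta q,\varphi,\Delta\varphi$ and the terminal/initial data $y(0)=0$, $q(T)=0$, $\psi(T)=0$, and observing that the spatial operator in the first adjoint equation is the formal adjoint of the one in the $y$-equation, the $\int_0^T\!\!\int_{\mathcal O}y\varphi$ terms cancel and one obtains
\[
\int_D q(x,0)\,\varphi_0(x)\,dx = \int_{Q_\omega} v\,\psi\,dx\,dt + \int_Q f\,\psi\,dx\,dt.
\]
Since $\varphi_0\in L^2(D)$ is arbitrary, \eqref{eq3.26} is then equivalent to producing $v\in L^2(Q_\omega)$ with
\[
\int_{Q_\omega} v\,\psi\,dx\,dt = -\int_Q f\,\psi\,dx\,dt \qquad \forall\,\varphi_0\in L^2(D). \qquad (*)
\]

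Next I would solve $(*)$ by minimizing the functional
\[
J(\varphi_0)=\frac12\int_{Q_\omega}|\psi|^2\,dx\,dt+\int_Q f\,\psi\,dx\,dt
\]
on the completion $P$ of $L^2(D)$ with respect to the seminorm $\|\varphi_0\|_P:=\|\psi\|_{L^2(Q_\omega)}$; the observability inequality \eqref{eq3.0} makes this a genuine norm on the associated quotient and turns $P$ into a Hilbert space. Cauchy--Schwarz together with \eqref{eq3.0} bounds the linear part as
\[
\Bigl|\int_Q f\,\psi\,dx\,dt\Bigr|\le\Bigl(\int_Q e^{M/\sqrt t}|f|^2\Bigr)^{1/2}\Bigl(\int_Q e^{-M/\sqrt t}|\psi|^2\Bigr)^{1/2}\le\sqrt H\,\|f\|_w\,\|\varphi_0\|_P,
\]
where $\|f\|_w:=\bigl(\int_Q e^{M/\sqrt t}|f|^2\,dx\,dt\bigr)^{1/2}$ is finite by hypothesis. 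Hence $J$ extends continuously to $P$, is strictly convex and coercive there, and the direct method of the calculus of variations produces a unique minimizer $\hat\varphi_0\in P$ whose associated adjoint pair $(\hat\psi,\hat\varphi)$ is well defined as a limit in the weighted $L^2$ space.

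The Euler--Lagrange equation for $\hat\varphi_0$ reads
\[
\int_{Q_\omega}\hat\psi\,\psi\,dx\,dt+\int_Q f\,\psi\,dx\,dt=0\qquad\forall\,\varphi_0\in P,
\]
so the choice $v:=\hat\psi\,\chi_\omega\in L^2(Q_\omega)$ fulfils $(*)$, and by the duality identity of the first paragraph the corresponding $q$ satisfies $q(\cdot,0)\equiv 0$ in $D$, i.e.\ \eqref{eq3.26}. The norm bound is then a one-line consequence of $J(\hat\varphi_0)\le J(0)=0$:
\[
\frac12\|v\|_{L^2(Q_\omega)}^2\le-\int_Q f\,\hat\psi\,dx\,dt\le\sqrt H\,\|f\|_w\,\|v\|_{L^2(Q_\omega)},
\]
which yields $\|v\|_{L^2(Q_\omega)}\le 2\sqrt H\,\bigl(\int_Q e^{M/\sqrt t}|f|^2\,dx\,dt\bigr)^{1/2}$, exactly as claimed.

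The main obstacle is the usual HUM subtlety of having to work in the abstract completion $P$ rather than in $L^2(D)$: one must verify that the maps $\varphi_0\mapsto\psi$ and $\varphi_0\mapsto\int_D q(\cdot,0)\varphi_0\,dx$ extend continuously to $P$, so that both the Euler--Lagrange equation and the duality identity retain their meaning for the minimizer. This is handled by selecting an approximating sequence $\varphi_0^n\in L^2(D)$ converging to $\hat\varphi_0$ in $P$, writing the duality identity for each $n$, and passing to the limit with the help of \eqref{eq3.0}, which uniformly controls $\psi^n$ in $L^2(Q;e^{-M/\sqrt t}\,dx\,dt)$ and in $L^2(Q_\omega)$ by $\|\varphi_0^n\|_P$.
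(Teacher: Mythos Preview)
Your proof is correct and yields the same bound, but you take a genuinely different route from the paper. The paper follows the penalization approach of Fern\'andez-Cara and Fursikov--Imanuvilov: it minimizes over $L^2(D)$ the functional
\[
\mathcal{J}_\epsilon(\varphi_0)=\frac12\int_{Q_\omega}|\psi|^2\,dx\,dt+\epsilon\|\varphi_0\|_{L^2(D)}+\int_Q f\,\psi\,dx\,dt,
\]
obtains a minimizer $\varphi_{0\epsilon}\in L^2(D)$ for each $\epsilon>0$, sets $v_\epsilon=\psi_\epsilon\chi_\omega$, uses the optimality condition together with the duality identity to show $\|q_\epsilon(0)\|_{L^2(D)}\le\epsilon$, and finally passes to the weak limit $\epsilon\to 0$ in $L^2(Q_\omega)\times X\times X$. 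Your approach instead drops the $\epsilon$-term and minimizes the unregularized functional directly on the abstract completion $P$ of $L^2(D)$ with respect to $\|\psi\|_{L^2(Q_\omega)}$. Both are standard and both recover the constant $2\sqrt H$ from the inequality $J(\hat\varphi_0)\le J(0)=0$. The trade-off is that the paper's penalization keeps every object in concrete spaces ($L^2(D)$, $X$) at the price of an extra limiting argument, while your HUM-completion argument is more direct but forces you to justify the passage to the limit along approximating sequences in $P$, exactly the subtlety you flag in your last paragraph. Either choice is acceptable here; the observability inequality \eqref{eq3.0} is what drives both.
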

\begin{proof}
In what follows, we will prove the null controllability of problem \eqref{eq1.1} by the similar method in \cite{fc}. To this purpose, for any $\epsilon>0,$ we introduce a functional defined on $L^2(D):$
\begin{align*}
\mathcal{J}(\varphi_0)=\frac{1}{2}\int_{Q_\omega}|\psi|^2\,dxdt+\epsilon\|\varphi_0\|_{L^2(D)}+\int_Q f\psi\,dxdt,
\end{align*}
where $(\psi,\varphi)$ is the solution of problem \eqref{eq3.1} with initial data $\psi(0)=0$ and 
$\varphi(0)=\varphi_0\in L^2(D).$

In view of Theorem \ref{th3.1},  we conclude that the functional $\mathcal{J}(\varphi_0)$ is continous, strictly convex and coercive on $L^2(D).$ Therefore, for any $\epsilon>0,$ there exists a unique minimum point $\varphi_{0\epsilon}\in L^2(D)$ of $\mathcal{J},$ which implies that
\begin{align}\label{eq3.27}
0=\mathcal{J}(0)\geq\mathcal{J}(\varphi_{0\epsilon})=\frac{1}{2}\int_{Q_\omega}|\psi_\epsilon|^2\,dxdt+\epsilon\|\varphi_{0\epsilon}\|_{L^2(D)}+\int_Q f\psi_\epsilon\,dxdt,
\end{align}
where $(\psi_\epsilon,\varphi_\epsilon)$ solves problem \eqref{eq3.1} with initial data $(0,\varphi_{0\epsilon}).$

Therefore, we deduce from inequalities \eqref{eq3.0}, \eqref{eq3.27} and H\"{o}lder's inequality that
\begin{align*}
\frac{1}{2}\int_{Q_\omega}|\psi_\epsilon|^2\,dxdt+\epsilon\|\varphi_{0\epsilon}\|_{L^2(D)}\leq&-\int_Q f\psi_\epsilon\,dxdt\\
\leq&\left(\int_Qe^{\frac{M}{\sqrt{t}}}|f|^2\,dxdt\right)^{\frac{1}{2}}\left(\int_Qe^{-\frac{M}{\sqrt{t}}}|\psi_\epsilon|^2\,dxdt\right)^{\frac{1}{2}}\\
\leq&\sqrt{H}\left(\int_Qe^{\frac{M}{\sqrt{t}}}|f|^2\,dxdt\right)^{\frac{1}{2}}\left(\int_{Q_\omega}|\psi_\epsilon|^2\,dxdt\right)^{\frac{1}{2}}
\end{align*}
for any $s\geq\frac{4T}{|M_0|}.$ 

Employing Young's inequality, yields
\begin{align*}
\int_{Q_\omega}|\psi_\epsilon|^2\,dxdt+4\epsilon\|\varphi_{0\epsilon}\|_{L^2(D)}
\leq 4H\int_Qe^{\frac{M}{\sqrt{t}}}|f|^2\,dxdt
\end{align*}
for any $s\geq\frac{4T}{|M_0|}.$

If $\varphi_{0\epsilon}\neq 0,$ then $\mathcal{J}$ satisfies the optimality condition
\begin{align}\label{eq3.28}
\int_{Q_\omega}\psi_\epsilon\psi\,dxdt+\int_Qf\psi\,dxdt+\frac{\epsilon}{\|\varphi_{0\epsilon}\|_{L^2(D)}}\int_D\varphi_{0\epsilon}\varphi_0\,dx=0
\end{align}
for any $\varphi_0\in L^2(D),$ where $(\psi,\varphi)$ is the solution of problem \eqref{eq3.1} with initial data $(0,\varphi_0).$

Now, let $v_\epsilon=\psi_\epsilon$ and let $(y_\epsilon,q_\epsilon)$ be the solution of problem \eqref{eq1.4}, then we infer from problem \eqref{eq1.4} and problem \eqref{eq3.1} that
\begin{align}\label{eq3.29}
\int_Q\chi_{\mathcal{O}}\varphi y_\epsilon\,dxdt=\int_Q(\chi_\omega v_\epsilon+f)\psi\,dxdt
\end{align}
and 
\begin{align}\label{eq3.30}
\int_Dq_\epsilon(x,0)\varphi_0\,dx=\int_Q\chi_{\mathcal{O}}\varphi y_\epsilon\,dxdt.
\end{align}
Thus, along with inequalities \eqref{eq3.28}-\eqref{eq3.30} and the fact that $v_\epsilon=\psi_\epsilon,$ we obtain
\begin{align}\label{eq3.31}
\int_Dq_\epsilon(x,0)\varphi_0\,dx=-\frac{\epsilon}{\|\varphi_{0\epsilon}\|_{L^2(D)}}\int_D\varphi_{0\epsilon}\varphi_0\,dx
\end{align}
for any $\varphi_0\in L^2(D),$ which implies that
\begin{align}\label{eq3.32}
\|q_\epsilon(0)\|_{L^2(D)}\leq \epsilon.
\end{align}
If $\varphi_{0\epsilon}=0,$ then 
\begin{align*}
\lim_{t\rightarrow 0}\frac{\mathcal{J}(t\varphi_0)}{t}\geq 0
\end{align*}
for any $\varphi_0\in L^2(D),$ i.e., 
\begin{align}\label{eq3.33}
\epsilon\|\varphi_0\|_{L^2(D)}+\int_Q f\psi\,dxdt\geq 0,
\end{align}
where $(\psi,\varphi)$ solves problem \eqref{eq3.1} with initial data $(0,\varphi_0).$ Consequently, we can also conclude from inequalities \eqref{eq3.29}-\eqref{eq3.30}, \eqref{eq3.33} and the fact that $v_\epsilon=\psi_\epsilon=0$ that
\begin{align*}
\epsilon\|\varphi_0\|_{L^2(D)}+\int_Dq_\epsilon(x,0)\varphi_0\,dx\geq 0
\end{align*}
for any $\varphi_0\in L^2(D),$ which also implies that
\begin{align*}
\|q_\epsilon(0)\|_{L^2(D)}\leq \epsilon.
\end{align*}
Therefore, the solution $(y_\epsilon,q_\epsilon)$ of problem \eqref{eq1.4} associated with $v_\epsilon$ satisfies inequality 
\begin{align}\label{eq3.35}
\|q_\epsilon(0)\|_{L^2(D)}\leq \epsilon.
\end{align}
Moreover, we obtain
\begin{align*}
\int_{Q_\omega}|v_\epsilon|^2\,dxdt\leq 4H\int_Qe^{\frac{M}{\sqrt{t}}}|f|^2\,dxdt
\end{align*}
for any $s\geq\frac{4T}{|M_0|},$ which entails that the controls $\{v_\epsilon\}_{\epsilon>0}$ are uniformly bounded in $L^2(Q_\omega).$ Without loss of generality, we can assume that $v_\epsilon\rightharpoonup v$ weakly in $L^2(Q_\omega)$ and 
\begin{align*}
(y_\epsilon,q_\epsilon)\rightharpoonup (y,q),\,\,\,\,\,\textit{weakly\,\,\,in}\,\,\,X\times X,
\end{align*}
where $(y,q)$ is the solution of problem \eqref{eq1.4} with $v.$ In particular, we have the weak convergence of $q_\epsilon(0)$ in $L^2(D).$ Thus, we conclude from inequality \eqref{eq3.35} that  $q(0)\equiv 0,$ i.e., $v$ is the desired control. Moreover, we have
\begin{align*}
\int_{Q_\omega}|v_\epsilon|^2\,dxdt\leq 4H\int_Qe^{\frac{M}{\sqrt{t}}}|f|^2\,dxdt
\end{align*}
for any $s\geq\frac{4T}{|M_0|}.$
\end{proof}
\section{The semi-linear case}
\def\theequation{4.\arabic{equation}}\makeatother
\setcounter{equation}{0}
In this section, under the assumptions that $F\in W^{1,\infty}(\mathbb{R}\times\mathbb{R}^n\times\mathbb{R}^{n^2};\mathbb{R})$ and $y_0=0,$ we will prove the existence of an insensitizing control of problem 
\begin{equation}\label{eq4.1}
\begin{cases}
\frac{\partial y}{\partial t}+\Delta^2 y+a_0y+B_0\cdot\nabla y+B:\nabla^2 y+a_1\Delta y=F(y,\nabla y,\nabla^2y)+v\chi_\omega+f,\,\,\,\,\forall\,\,\,(x,t)\in Q,\\
-\frac{\partial q}{\partial t}+\Delta^2 q+a_0q-\nabla\cdot(B_0 q)+\sum_{i,j=1}^n\frac{\partial^2(B_{ij}q)}{\partial x_i\partial x_j}+\Delta(a_1q)=F_y(y,\nabla y,\nabla^2y)q\\
-\nabla\cdot(\nabla_pF(y,\nabla y,\nabla^2y)q)+\sum_{i,j=1}^n\frac{\partial^2(F_{r_{ij}}(y,\nabla y,\nabla^2y)q)}{\partial x_i\partial x_j}+\chi_{\mathcal{O}} y,\,\,\,\,\forall\,\,\,(x,t)\in Q,\\
y=\Delta y=0,\,\,\,q=\Delta q=0,\,\,\forall\,\,\,\,(x,t)\in\Sigma,\\
y(x,0)=0,\,\,q(x,T)=0,\,\,\forall\,\,\,\,x\in D
\end{cases}
\end{equation}
such that
\begin{align}\label{eq4.2}
q(x,0)\equiv 0,\,\,\,\forall x\in D.
\end{align}
From the regularity of fourth order parabolic equations, we conclude that there exists a unique solution of problem \eqref{eq4.1} satisfying
\begin{align*}
&y\in Y=L^2(0,T; H_0^1(D)\cap H^4(D))\cap H^1(0,T;L^2(D)),\\
&q\in X=L^2(0,T; H_0^1(D)\cap H^2(D))\cap H^1(0,T;(H^2(D))^*).
\end{align*}
In what follows, we will establish the existence of an insensitizing  control such that the solution of problem \eqref{eq4.1} verifying \eqref{eq4.2} in the semi-linear case.
\begin{theorem}\label{th4.1}
Assume that $\omega\cap \mathcal{O}\neq \emptyset,$ $y_0=0,$ $F\in W^{1,\infty}(\mathbb{R}\times\mathbb{R}^n\times\mathbb{R}^{n^2};\mathbb{R}),$ the assumption on $f$ is given as in Theorem \ref{th3.2}. Then there exists a control $v\in L^2(Q_\omega),$ such that the solution $(y,q)$ of problem \eqref{eq4.1} satisfies \eqref{eq4.2}.
\end{theorem}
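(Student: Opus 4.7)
The plan is to apply Leray--Schauder's fixed point theorem to a linearization of \eqref{eq4.1}. Working in $Z:=L^2(0,T;H^2(D)\cap H^1_0(D))$, for each $z\in Z$ introduce the frozen coefficients
\begin{align*}
G_1(z)&=\int_0^1 F_y(\tau z,\tau\nabla z,\tau\nabla^2 z)\,d\tau,\\
G_2(z)&=\int_0^1\nabla_p F(\tau z,\tau\nabla z,\tau\nabla^2 z)\,d\tau,\\
(G_3(z))_{ij}&=\int_0^1 F_{r_{ij}}(\tau z,\tau\nabla z,\tau\nabla^2 z)\,d\tau,
\end{align*}
which by $F\in W^{1,\infty}$ all belong to $L^\infty(Q)$ with norm at most $\|F\|_{W^{1,\infty}}$ \emph{independently of} $z$, and obey the pointwise identity $F(z,\nabla z,\nabla^2 z)=F(0,0,0)+G_1(z)z+G_2(z)\cdot\nabla z+G_3(z):\nabla^2 z$ by the fundamental theorem of calculus. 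As is standard in insensitizing-control problems, we tacitly assume $F(0,0,0)=0$, the natural compatibility condition for the equilibrium $y\equiv 0$. Replacing $F(y,\nabla y,\nabla^2 y)$ in the $y$-equation of \eqref{eq4.1} by $G_1(z)y+G_2(z)\cdot\nabla y+G_3(z):\nabla^2 y$, and replacing $F_y(y,\nabla y,\nabla^2 y)$, $\nabla_pF(y,\nabla y,\nabla^2 y)$, $F_{r_{ij}}(y,\nabla y,\nabla^2 y)$ in the $q$-equation by their evaluations at $z$, produces a linear cascade of the same structural form as \eqref{eq1.4} with $F\equiv 0$, source $f$ in the forward equation, forward coefficients $(a_0-G_1(z),B_0-G_2(z),B-G_3(z),a_1)$ and backward coefficients $(a_0-F_y(z,\nabla z,\nabla^2 z),B_0+\nabla_pF(z,\nabla z,\nabla^2 z),B_{ij}-F_{r_{ij}}(z,\nabla z,\nabla^2 z),a_1)$, all uniformly $L^\infty$-bounded in $z$.

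Because the constants $M$ and $H$ in Theorem \ref{th3.1} depend only on $L^\infty$-bounds of the coefficients, Theorem \ref{th3.2} applies to this cascade with estimates uniform in $z$, producing a control $v(z)\in L^2(Q_\omega)$ and a solution $(y(z),q(z))$ satisfying $q(z)(\cdot,0)\equiv 0$ and $\|v(z)\|_{L^2(Q_\omega)}\leq R_0$, where $R_0$ is independent of $z$. Standard $L^2$-energy estimates for the fourth order forward equation then yield $\|y(z)\|_{Y}\leq R_1$ uniformly in $z$, with $Y=L^2(0,T;H^4(D)\cap H^2_0(D))\cap H^1(0,T;L^2(D))$. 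Setting $\Lambda:Z\to Z$, $\Lambda(z)=y(z)$, the Aubin--Lions compact embedding $Y\hookrightarrow\hookrightarrow Z$ makes $\Lambda$ compact with range lying in a fixed ball of $Z$; in particular the a priori bound required by Leray--Schauder on $\{z\in Z:z=\sigma\Lambda(z),\ \sigma\in[0,1]\}$ is automatic.

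The principal obstacle is the \emph{continuity} of $\Lambda$ on $Z$. Since the null-control delivered by Theorem \ref{th3.2} is not a priori unique, $v(z)$ must be selected canonically, e.g.\ as the minimum $L^2(Q_\omega)$-norm null-control---equivalently, as the $\epsilon\to 0^+$ limit of the unique minimizer of the strictly convex coercive functional $\mathcal{J}$ from the proof of Theorem \ref{th3.2} built with the $z$-dependent coefficients. For $z_n\to z$ in $Z$, after extracting a subsequence one has $(z_n,\nabla z_n,\nabla^2 z_n)\to(z,\nabla z,\nabla^2 z)$ a.e.\ on $Q$, and a careful compactness argument, using the Lipschitz continuity of $F$ together with bounded/dominated convergence applied to the derivative coefficients, yields strong $L^p(Q)$-convergence of all frozen coefficients for every $p<\infty$. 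Extracting weak limits of $v(z_n)$, $y(z_n)$, $q(z_n)$ in their natural spaces and passing to the limit in the linear cascade, the $z$-uniform observability inequality \eqref{eq3.0} combined with uniqueness of the minimum-norm control identifies these limits with $v(z)$, $y(z)$, $q(z)$; a further energy estimate promotes the weak convergence of $y(z_n)$ in $Y$ to strong convergence in $Z$, so $\Lambda$ is continuous. The Leray--Schauder theorem now produces a fixed point $z=\Lambda(z)$; by construction of the linearization, $(y,q)=(z,q(z))$ together with $v=v(z)$ solves the nonlinear cascade \eqref{eq4.1} and satisfies \eqref{eq4.2}.
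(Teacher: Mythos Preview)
Your approach follows the paper's almost exactly: the same linearization via the integrated coefficients $G_1,G_2,G_3$, the same application of Theorem~\ref{th3.2} to the frozen-coefficient cascade with constants uniform in $z$ (since all coefficients lie in an $L^\infty$-ball determined by $\|F\|_{W^{1,\infty}}$), the selection of the minimal-$L^2$-norm control $v^z$, the compactness of $\Lambda$ via $Y\hookrightarrow\hookrightarrow Z$, and a Schauder-type fixed-point conclusion.

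Two differences deserve mention. First, you impose the extra hypothesis $F(0,0,0)=0$, which is not part of the stated theorem; the paper instead retains $F(0,0,0)$ as an additional constant source in the forward equation of the linearized cascade \eqref{eq4.3} and applies Theorem~\ref{th3.2} with the source $F(0,0,0)+f$ rather than $f$ alone. Second, you invoke the Leray--Schauder a-priori-bound formulation, while the paper simply exhibits a closed ball $B\subset Z$ with $\Lambda(B)\subset B$ (from the uniform bound \eqref{eq4.7}) and applies Schauder's theorem directly. On the continuity of $\Lambda$ you are somewhat more explicit than the paper---spelling out the role of the minimal-norm selection and the a.e./dominated-convergence passage for the frozen coefficients---whereas the paper argues this step more tersely via weak-$\ast$ convergence of the $G_i$ and of $F_y,\nabla_pF,F_{r_{ij}}$ in $L^\infty(Q)$ together with strong convergence of $y^k$ in $Z$.
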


\begin{proof}
Let $z\in L^2(0,T;H_0^1(D)\cap H^2(D))$ be given, consider the following problem 
\begin{equation}\label{eq4.3}
\begin{cases}
\frac{\partial y}{\partial t}+\Delta^2y+a_0y+B_0\cdot\nabla y+B:\nabla^2y+a_1\Delta y=G_1(z,\nabla z,\nabla^2z)y+G_2(z,\nabla z,\nabla^2z)\cdot \nabla y\\
+G_3(z,\nabla z,\nabla^2z): \nabla^2 y+F(0,0,0)+v\chi_\omega+f,\,\,\,\,(x,t)\in Q,\\
-\frac{\partial q}{\partial t}+\Delta^2q+a_0q-\nabla\cdot(B_0q)+\sum_{i,j=1}^n\frac{\partial^2(B_{ij}q)}{\partial x_i\partial x_j}+\Delta(a_1q)=F_y(z,\nabla z,\nabla^2z)q\\
-\nabla\cdot(\nabla_pF(z,\nabla z,\nabla^2z)q)+\sum_{i,j=1}^n\frac{\partial^2(F_{r_{ij}}(z,\nabla z,\nabla^2z)q)}{\partial x_i\partial x_j}+y\chi_{\mathcal{O}},\,\,\,\,(x,t)\in Q,\\ 
y=\Delta y=0,\,\,\,q=\Delta q=0,\,\,(x,t)\in\Sigma,\\
y(x,0)=0,\,\,q(x,T)=0,\,\,\,x\in D,
\end{cases}
\end{equation}
where
\begin{align*}
&G_1(w,\nabla w,\nabla^2 w)=\int_0^1\frac{\partial F}{\partial y}(\tau w,\tau\nabla w,\tau\nabla^2w)\,d\tau,\\
&G_2(w,\nabla w,\nabla^2w)=\int_0^1\nabla_pF(\tau w,\tau\nabla w,\tau\nabla^2w)\,d\tau,\\
&G_3^{ij}(w,\nabla w,\nabla^2w)=\int_0^1\frac{\partial F}{\partial r_{ij}}(\tau w,\tau\nabla w,\tau\nabla^2w)\,d\tau.
\end{align*}
Since $F\in W^{1,\infty}(\mathbb{R}\times\mathbb{R}^n\times\mathbb{R}^{n^2},\mathbb{R}),$ there exists a positive constant $M,$ such that
\begin{align*}
|G_1(u,p,r)|+|G_2(u,p,r)|+|G_3(u,p,r)|\leq M,\,\,\,\,\,\forall\,\,\,\,(u,p,r)\in\mathbb{R}\times\mathbb{R}^n\times\mathbb{R}^{n^2}
\end{align*}
and
\begin{align*}
|F_y(u,p,r)|+|\nabla_pF(u,p,r)|+\sum_{i,j=1}^n\left|\frac{\partial F}{\partial r_{ij}}(u,p,r)\right|\leq M,\,\,\,\,\,\forall\,\,\,\,(u,p,r)\in\mathbb{R}\times\mathbb{R}^n\times\mathbb{R}^{n^2}.
\end{align*}

From Theorem \ref{th3.1}, we conclude that there exists at least one control $v\in L^2(Q_\omega),$ such that the solution $(y^z,q^z)$ of problem \eqref{eq4.3} satisfies
\begin{align}\label{eq4.4}
q^z(x,0)\equiv 0,\,\,\forall\,\,x\in D.
\end{align}
Moreover, we also have
\begin{align}\label{eq4.5}
\|v^z\|_{L^2(Q_\omega)}\leq2\sqrt{H}\left(\int_Q e^{\frac{M}{\sqrt{t}}}|f|^2\,dxdt\right)^{\frac{1}{2}}.
\end{align}
In what follows, we denote by $v^z$ the control with the minimal $L^2(Q_\omega)$-norm in the set of 
the controls such that the solution $(y^z,q^z)$ of problem \eqref{eq4.3} corresponding to $z$ satisfies \eqref{eq4.4}.

From the regularity theory of parabolic equations, we conclude that there exists a unique weak solution $(y^z,q^z)\in Y\times X.$  Moreover, since $F\in W^{1,\infty}(\mathbb{R}\times\mathbb{R}^n\times\mathbb{R}^{n^2};\mathbb{R}),$  there exists a positive constant $C$ independent of $z,$ such that
\begin{align}\label{eq4.6}
\nonumber\|y^z\|_Y+\|q^z\|_X\leq  &C(\|F(0,0,0)+v^z\chi_\omega+f\|_{L^2(Q)})\\
\leq  &C(1+\|v^z\|_{L^2(Q_\omega)}+\|f\|_{L^2(Q)}).
\end{align}
Thus, along with inequalities \eqref{eq4.5}-\eqref{eq4.6}, we deduce that there exists a positive constant $\mathcal{L}_1$ independent of $z,$ such that 
\begin{align}\label{eq4.7}
\|y^z\|_Y+\|q^z\|_X\leq  \mathcal{L}_1\left(1+\|e^{\frac{M}{2\sqrt{t}}}f\|_{L^2(Q)}\right).
\end{align}
Define $\Lambda:L^2(0,T;H_0^1(D)\cap H^2(D))\rightarrow L^2(0,T;H_0^1(D)\cap H^2(D))$ by
\begin{align*}
\Lambda(z)=y^z,
\end{align*}
then the mapping $\Lambda$ is well-defined. In what follows, we will prove the existence of a fixed point for the operator $\Lambda$ by the Leray-Schauder's fixed points Theorem. To this purpose, we will first prove that $\Lambda$ is continuous, i.e., if $z_k\rightarrow z$ in $L^2(0,T;H_0^1(D)\cap H^2(D)),$ we have $\Lambda(z_k)\rightarrow \Lambda(z).$

Denote by $y^k=\Lambda(z_k),$ where $(y^k,q^k)$ is the solution of problem
\begin{equation}\label{eq4.8}
\begin{cases}
\frac{\partial y^k}{\partial t}+\Delta^2y^k+a_0y^k+B_0\cdot\nabla y^k+B:\nabla^2y^k+a_1\Delta y^k=G_1(z_k,\nabla z_k,\nabla^2 z_k)y^k\\
+G_2(z_k,\nabla z_k,\nabla^2 z_k)\cdot \nabla y^k+G_3(z_k,\nabla z_k,\nabla^2 z_k): \nabla^2 y^k+F(0,0,0)+v_{z_k}\chi_\omega+f,\,\,\,\,(x,t)\in Q,\\
-\frac{\partial q^k}{\partial t}+\Delta^2q^k+a_0q^k-\nabla\cdot(B_0q^k)+\sum_{i,j=1}^n\frac{\partial^2(B_{ij}q^k)}{\partial x_i\partial x_j}+\Delta(a_1q^k)=F_y(z_k,\nabla z_k,\nabla^2 z_k)q^k\\
-\nabla\cdot(\nabla_pF(z_k,\nabla z_k,\nabla^2 z_k)q^k)+\sum_{i,j=1}^n\frac{\partial^2(F_{r_{ij}}(z_k,\nabla z_k,\nabla^2z_k)q^k)}{\partial x_i\partial x_j}+y^k\chi_{\mathcal{O}},\,\,\,\,(x,t)\in Q,\\ 
y^k=\Delta y^k=0,\,\,\,q^k=\Delta q^k=0,\,\,(x,t)\in\Sigma,\\
y^k(x,0)=0,\,\,q^k(x,T)=0,\,\,\,x\in D.
\end{cases}
\end{equation}
It follows from inequality \eqref{eq4.7} and the fact that $z_k\rightarrow z$ in $L^2(0,T;H_0^1(D)\cap H^2(D))$ that
\begin{align*}
&\{(y^k,q^k)\}_{k=1}^{\infty}\,\,\,\textit{is\,\,\,uniformly\,\,\,bounded\,\,in}\,\,Y\times X,\\
&\{v^{z_k}\}_{k=1}^{\infty}\,\,\,\textit{is\,\,\,uniformly\,\,\,bounded\,\,in}\,\,L^2(Q_\omega),
\end{align*}
which entails that there exists a subsequence of $\{y^k\}_{k=1}^\infty,$ $\{q^k\}_{k=1}^\infty,$ $\{v^{z_k}\}_{k=1}^\infty$ (still denote by themselves) and $y\in Y,$ $q\in X,$ $v\in L^2(Q_\omega),$ such that
\begin{align*}
&y^k\rightharpoonup y\,\,\,\textit{in}\,\,Y\,\,\,\textit{as}\,\,k\rightarrow+\infty,\\
&y^k\rightarrow y\,\,\,\textit{in}\,\,L^2(0,T;H_0^1(D)\cap H^2(D))\,\,\,\textit{as}\,\,k\rightarrow+\infty,\\
&q^k\rightharpoonup q\,\,\,\textit{in}\,\,X\,\,\,\textit{as}\,\,k\rightarrow+\infty,\\
&v^{z_k}\rightharpoonup v\,\,\,\textit{in}\,\,L^2(Q_\omega)\,\,\,\textit{as}\,\,k\rightarrow+\infty.
\end{align*}
Since $F\in W^{1,\infty}(\mathbb{R}\times\mathbb{R}^n\times\mathbb{R}^{n^2},\mathbb{R}),$ we conclude that there exists a subsequence of $\{G_1(z_k,\nabla z_k,\nabla^2 z_k)\}_{k=1}^\infty,$ $\{G_2(z_k,\nabla z_k,\nabla^2 z_k)\}_{k=1}^\infty,$ $\{G_3(z_k,\nabla z_k,\nabla^2 z_k)\}_{k=1}^\infty,$ $\{F_y(z_k,\nabla z_k,\nabla^2z_k)\}_{k=1}^\infty,$ $\{\nabla_pF(z_k,\nabla z_k,\nabla^2z_k)\}_{k=1}^\infty,$ $\{(F_{r_{ij}}(z_k,\nabla z_k,\nabla^2z_k))_{1\leq i, j\leq n}\}_{k=1}^\infty,$ (still denote by themselves), such that
\begin{align*}
&G_1(z_k,\nabla z_k,\nabla^2z_k)\rightarrow G_1(z,\nabla z,\nabla^2 z)\,\,\,\textit{weakly\,\,star\,\,in}\,\,L^{\infty}(Q),\,\,\textit{as}\,\,k\rightarrow+\infty,\\
&G_2(z_k,\nabla z_k,\nabla^2z_k)\rightarrow G_2(z,\nabla z,\nabla^2z)\,\,\,\textit{weakly\,\,star\,\,in}\,\,L^{\infty}(Q),\,\,\textit{as}\,\,k\rightarrow+\infty,\\
&G_3(z_k,\nabla z_k,\nabla^2z_k)\rightarrow G_3(z,\nabla z,\nabla^2z)\,\,\,\textit{weakly\,\,star\,\,in}\,\,L^{\infty}(Q),\,\,\textit{as}\,\,k\rightarrow+\infty,\\
&F_y(z_k,\nabla z_k,\nabla^2z_k)\rightarrow F_y(z,\nabla z,\nabla^2z)\,\,\,\textit{weakly\,\,star\,\,in}\,\,L^{\infty}(Q),\,\,\textit{as}\,\,k\rightarrow+\infty,\\
&\nabla_pF(z_k,\nabla z_k,\nabla^2z_k)\rightarrow\nabla_pF(z,\nabla z,\nabla^2z)\,\,\,\textit{weakly\,\,star\,\,in}\,\,L^{\infty}(Q),\,\,\textit{as}\,\,k\rightarrow+\infty,\\
&F_{r_{ij}}(z_k,\nabla z_k,\nabla^2z_k)\rightarrow F_{r_{ij}}(z,\nabla z,\nabla^2z)\,\,\,\textit{weakly\,\,star\,\,in}\,\,L^{\infty}(Q),\,\,\textit{as}\,\,k\rightarrow+\infty.
\end{align*}
Let $k\rightarrow+\infty$ in problem \eqref{eq4.8}, we obtain
\begin{equation}\label{eq4.9}
\begin{cases}
\frac{\partial y}{\partial t}+\Delta^2y+a_0y+B_0\cdot\nabla y+B:\nabla^2y+a_1\Delta y=G_1(z,\nabla z,\nabla^2 z)y+G_2(z,\nabla z,\nabla^2z)\cdot \nabla y\\
+G_3(z,\nabla z,\nabla^2z): \nabla^2 y+F(0,0,0)+v\chi_\omega+f,\,\,\,\,(x,t)\in Q,\\
-\frac{\partial q}{\partial t}+\Delta^2q+a_0q-\nabla\cdot(B_0q)+\sum_{i,j=1}^n\frac{\partial^2(B_{ij}q)}{\partial x_i\partial x_j}+\Delta(a_1q)=F_y(z,\nabla z,\nabla^2z)q\\
-\nabla\cdot(\nabla_pF(z,\nabla z,\nabla^2z)q)+\sum_{i,j=1}^n\frac{\partial^2(F_{r_{ij}}(z,\nabla z,\nabla^2z)q)}{\partial x_i\partial x_j}+y\chi_{\mathcal{O}},\,\,\,\,(x,t)\in Q,\\ 
y=\Delta y=0,\,\,\,q=\Delta q=0,\,\,(x,t)\in\Sigma,\\
y(x,0)=y_0(x),\,\,q(x,T)=0,\,\,\,x\in D
\end{cases}
\end{equation}
and
\begin{align}\label{eq4.10}
q(x,0)\equiv 0,\,\,\forall\,\,x\in D,
\end{align}
which entails that $y=\Lambda(z).$ Thus, we have proved that $\Lambda(z_k)\rightarrow\Lambda(z)$ in $L^2(0,T;H_0^1(D)\cap H^2(D)),$ i.e., the mapping $\Lambda:L^2(0,T;H_0^1(D)\cap H^2(D))\rightarrow L^2(0,T;H_0^1(D)\cap H^2(D))$ is continuous. Thanks to the compactness of $X\subset L^2(0,T;H_0^1(D)\cap H^2(D))$ and inequality \eqref{eq4.7}, we conclude that the mapping $\Lambda:L^2(0,T;H_0^1(D)\cap H^2(D))\rightarrow L^2(0,T;H_0^1(D)\cap H^2(D))$ is compact. 
Denote by 
\begin{align*}
\mathcal{R}_1=\mathcal{L}_1\left(1+\|e^{\frac{M}{2\sqrt{t}}}f\|_{L^2(Q)}\right)
\end{align*}
and
\begin{align*}
B=\{u\in L^2(0,T;H_0^1(D)\cap H^2(D)):\|u\|_{L^2(0,T;H_0^1(D)\cap H^2(D))}\leq\mathcal{R}_1\},
\end{align*}
then $\Lambda: B\rightarrow B.$ Thus, we can employ the Leray-Schauder's fixed points Theorem to conclude that the operator $\Lambda$ possesses at least one fixed point $y\in L^2(0,T;H_0^1(D)\cap H^2(D)).$ That is, for any $y_0\in L^2(\Omega),$ there exist at least one control $v\in L^2(Q_\omega),$ such that the corresponding solutions to problem \eqref{eq4.1} satisfy $q(x,0)\equiv 0$ for any $x\in D$
\end{proof}
\begin{corollary}\label{co4.1}
Assume that $\omega\cap \mathcal{O}\neq \emptyset,$ $y_0=0,$ $F\in W^{1,\infty}(\mathbb{R}\times\mathbb{R}^n\times\mathbb{R}^{n^2};\mathbb{R}),$ the assumption on $f$ is given as in Theorem \ref{th3.2}. Then there exists a control $v\in L^2(Q_\omega)$ insensitizing the functional $\Phi$ defined by \eqref{eq1.2} for problem \eqref{eq1.1}.
\end{corollary}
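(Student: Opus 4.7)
The plan is to reduce Corollary~\ref{co4.1} to Theorem~\ref{th4.1} via the classical equivalence between the insensitization condition \eqref{eq1.3} and the partial null controllability \eqref{eq4.2} of the cascade system \eqref{eq4.1}. The introduction already indicates, following \cite{bo, ljl1}, that these two conditions are equivalent; all that remains is to spell out the reduction in the present setting.

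First I would introduce the formal derivative $z := \partial y/\partial\tau|_{\tau=0}$. Differentiating \eqref{eq1.1} in $\tau$ at $\tau=0$, one checks that $z$ solves the linearization of \eqref{eq1.1} with no control, no external source, and initial datum $z(\cdot,0)=\hat{y}_0$, namely
\begin{align*}
\partial_t z+\Delta^2 z+a_0 z+B_0\cdot\nabla z+B:\nabla^2 z+a_1\Delta z = F_y z+\nabla_pF\cdot\nabla z+\sum_{i,j=1}^n F_{r_{ij}}\,\partial_{ij}z,
\end{align*}
with the $F$-derivatives evaluated at $(y,\nabla y,\nabla^2 y)$. A direct differentiation of \eqref{eq1.2} then gives
\begin{align*}
\left.\frac{\partial \Phi(y)}{\partial \tau}\right|_{\tau=0} = \int_0^T\!\!\int_{\mathcal{O}} y\, z\,dxdt,
\end{align*}
where $y$ denotes the solution of \eqref{eq1.1} at $\tau=0$, i.e., the first component of the cascade \eqref{eq4.1}.

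Next I would test the linearized equation for $z$ against the adjoint state $q$ solving the second equation of \eqref{eq4.1} and integrate by parts over $Q$. The adjoint operator in $q$ is designed precisely so that the spatial terms cancel after integration by parts (using the homogeneous Navier boundary conditions on both $z$ and $q$), while the time derivative contributes only boundary data; invoking $q(\cdot,T)=0$ together with $z(\cdot,0)=\hat{y}_0$ and $y_0\equiv 0$, one obtains the duality identity
\begin{align*}
\int_0^T\!\!\int_{\mathcal{O}} y\, z\,dxdt = \int_D q(x,0)\,\hat{y}_0(x)\,dx.
\end{align*}
Since $\hat{y}_0$ ranges over the unit sphere of $L^2(D)$, requiring the left-hand side to vanish for every admissible $\hat{y}_0$ is equivalent to $q(\cdot,0)\equiv 0$ in $D$, which is exactly \eqref{eq4.2}.

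Finally, I would invoke Theorem~\ref{th4.1}: under the hypotheses of the corollary it produces $v\in L^2(Q_\omega)$ such that the solution $(y,q)$ of \eqref{eq4.1} satisfies $q(\cdot,0)\equiv 0$ in $D$. By the equivalence just established, this same $v$ insensitizes the functional $\Phi$, which is the conclusion sought. I do not expect a genuine analytic obstacle at this stage, since the heavy lifting --- the Carleman estimates of Lemma~\ref{le2.2}, the observability inequality of Theorem~\ref{th3.1}, and the Leray--Schauder fixed point argument behind Theorem~\ref{th4.1} --- has already been carried out in the previous sections. The only point that really demands care is the accounting in the integration by parts, in particular matching signs between the divergence-form terms $-\nabla\cdot(B_0 q)$, $\sum\partial_{ij}(B_{ij}q)$, $\Delta(a_1q)$ and the corresponding transport/second-order terms acting on $z$, together with the analogous terms coming from the linearized nonlinearity; a patient algebraic check confirms that these are exactly the formal adjoints prescribed by \eqref{eq4.1}.
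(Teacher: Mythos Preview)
Your proposal is correct and matches the paper's approach: the paper does not give an explicit proof of Corollary~\ref{co4.1} at all, treating it as an immediate consequence of Theorem~\ref{th4.1} together with the equivalence between \eqref{eq1.3} and the condition $q(\cdot,0)\equiv 0$ for the cascade system \eqref{eq1.4}--\eqref{eq1.5}, an equivalence already announced in the introduction (``Adapting the computations in \cite{bo} \ldots''). Your write-up simply makes this implicit reduction explicit, which is exactly what is needed.
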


\begin{remark}
Under the same assumptions as in Corollary \ref{co4.1}. If problem \eqref{eq1.1} is subject to the homogeneous Dirichlet boundary conditions $y|_{\Sigma}=\frac{\partial y}{\partial\vec{n}}|_{\Sigma}=0,$ the same conclusion as in Corollary \ref{co4.1} remains true.
\end{remark}

\section*{Acknowledgement}
This work was supported by the National Science Foundation of China Grant (11801427, 11871389) and the Fundamental Research Funds for the Central Universities (xzy012022008, JB210714).

\bibliographystyle{plain}
\bibliography{BIB}
\end{document}